\documentclass[11pt]{article}

\usepackage{amsmath,amssymb,amscd,amsthm,amsfonts}
\usepackage{graphicx,subfigure}
\usepackage{hyperref}
\usepackage{pstricks}
\usepackage{dsfont}
\usepackage{pifont}

\newtheorem{theorem}{Theorem}[section]
\newtheorem{proposition}[theorem]{Proposition}

\newtheorem*{theoremp}{Theorem}
\newtheorem{lemma}[theorem]{Lemma}

\newtheorem{conjecture}[theorem]{Conjecture}

\newtheorem{definition}[theorem]{Definition}

\newcommand{\N}{\mathbb{N}}
\newcommand{\R}{\mathds{R}}

\newcommand{\ff}{\mathcal{F}}

\newcommand{\cf}{{\cal F}}
\newcommand{\ck}{{\cal K}}

\newcommand{\ep}{\varepsilon}

\DeclareMathOperator{\conv}{conv}

\DeclareMathOperator{\vol}{vol}

\title{\bf Quantitative combinatorial geometry for continuous parameters
}

\author{J. A. De Loera \and R. N. La Haye \and D. Rolnick \and P. Sober\'on}

\begin{document}

\maketitle

\begin{abstract}
We prove variations of Carath\'eodory's, Helly's and Tverberg's theorems where the sets involved are measured according to continuous functions such as the volume or diameter.  Among our results, we present continuous quantitative versions of Lov\'asz's colorful Helly theorem, B\'ar\'any's colorful Carath\'eodory's theorem, and the colorful Tverberg theorem.
\end{abstract}

\section{Introduction}

Carath\'eodory's, Helly's, and Tverberg's theorems are undoubtedly among the most important theorems in convex geometry (see \cite{Mbook} for an introduction).  Many generalizations and extensions, including colorful, fractional, and topological versions of these theorems have been developed before. For a glimpse of the extensive literature see \cite{DLAS15, DGKsurvey63,Eckhoffsurvey93,Mbook,Wen1997,3nziegler} and the references therein. 

This paper presents several new quantitative versions of these theorems where we measure the size of the convex sets involved with a continuous function, such as the volume or diameter.
 
\begin{table}[h]\label{tablechida}
\begin{tabular}{|l|l|l|l|}
\hline \multicolumn{1}{|c|}{} &\multicolumn{1}{|c|}{Carath\'eodory} & \multicolumn{1}{|c|}{Helly}  &\multicolumn{1}{|c|}{Tverberg}  \\
\hline  Monochromatic version & \checkmark+ $(2.4-5)$& \checkmark+ $(3.1, 3.5)$ & $(4.1)$ \\
\hline  Colorful version & $( 2.4-6)$ & $(3.6)$ & $(4.2)$  \\
\hline
\end{tabular}\vspace{-1pt}

\caption{\emph{Prior and new results in quantitative combinatorial convexity.} The symbol $\checkmark $ means some prior result was known, $(\#)$ indicates the number of the theorem that is a brand new result or a stronger version of prior results.}\vspace{0pt}
\end{table}

The equivalent results for discrete parameters are discussed in the paper \cite{discrete-new}, where the focus is on Tverberg-type results.  The full picture is presented in \cite{quantitativefull}, which was split into this paper and \cite{discrete-new} for publication.

Let us recall the statement of the original results.

\begin{theoremp}[Carath\'eodory, 1911 \cite{originalCaratheodory}] 
\label{thm:Caratheodory}
Let $S$ be any subset of $\R^d$. Then each point in the convex hull of $S$ is a convex combination of at most $d+1$ points of $S$.
\end{theoremp}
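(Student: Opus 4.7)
The plan is to prove this by a descent argument on the number of points used in a convex combination. Start with a point $x$ in the convex hull of $S$; by definition of the convex hull, $x$ can be written as a convex combination $x = \sum_{i=1}^n \lambda_i p_i$ of finitely many points $p_1,\dots,p_n \in S$, with $\lambda_i \ge 0$ and $\sum_i \lambda_i = 1$. Take such a representation with $n$ minimal. The goal is to show $n \le d+1$.

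Suppose for contradiction that $n \ge d+2$. Then the $n$ points $p_1,\dots,p_n$ lie in $\R^d$ and are too many to be affinely independent, so there exist scalars $\mu_1,\dots,\mu_n$, not all zero, with $\sum_i \mu_i p_i = 0$ and $\sum_i \mu_i = 0$. The key observation is that for every real $t$,
\[
x = \sum_{i=1}^n (\lambda_i - t\mu_i)\,p_i, \qquad \sum_{i=1}^n (\lambda_i - t\mu_i) = 1,
\]
so modifying $t$ preserves the property of being an affine combination summing to $1$. It remains to choose $t$ that preserves nonnegativity while killing one coefficient.

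Since $\sum_i \mu_i = 0$ and the $\mu_i$ are not all zero, at least one $\mu_i$ is strictly positive. Set $t^* := \min\{\lambda_i/\mu_i : \mu_i > 0\} \ge 0$, and let $i^*$ be an index realizing the minimum. For indices with $\mu_i \le 0$ we have $\lambda_i - t^*\mu_i \ge \lambda_i \ge 0$, while for indices with $\mu_i > 0$ the choice of $t^*$ gives $\lambda_i - t^*\mu_i \ge 0$, with equality at $i^*$. Hence $x$ is now written as a convex combination of at most $n-1$ of the points $p_i$, contradicting the minimality of $n$. Therefore $n \le d+1$.

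There is essentially no obstacle here: the proof is completely elementary and classical. The only subtle point is recognizing that affine dependence (rather than linear dependence) is the right tool, which is why one must insist that both $\sum \mu_i p_i = 0$ and $\sum \mu_i = 0$ hold; this is what guarantees that the perturbed coefficients still sum to $1$, so that a convex combination remains a convex combination throughout the descent.
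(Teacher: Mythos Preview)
Your proof is correct; this is the standard descent argument via affine dependence, and there are no gaps. Note that the paper does not actually prove Carath\'eodory's theorem: it is merely recalled as a classical result (with a citation to \cite{originalCaratheodory}) and used as background, so there is no ``paper's own proof'' to compare against.
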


\begin{theoremp}[Helly, 1913 \cite{originalHelly}] 
\label{thm:Helly}
Let $\cf$ be a finite family of convex sets of $\R^d$. If $\bigcap \ck \neq
\emptyset$ for all $\ck \subset \cf$ of cardinality at most $d+1$, then
$\bigcap \cf \neq \emptyset$. 
\end{theoremp}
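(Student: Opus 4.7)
The plan is to induct on the cardinality $n := |\cf|$, with Radon's partition theorem serving as the essential ingredient. The base case $n \leq d+1$ is immediate: the hypothesis applied to $\ck = \cf$ itself gives $\bigcap \cf \neq \emptyset$.

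For the inductive step, fix $n \geq d+2$ and write $\cf = \{C_1, \dots, C_n\}$. First I would apply the inductive hypothesis to each subfamily $\cf \setminus \{C_i\}$, which still satisfies the Helly condition on its $(d+1)$-subcollections. This produces a point $x_i \in \bigcap_{j \neq i} C_j$ for every $i \in \{1, \dots, n\}$.

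Next, invoke Radon's theorem on the $d+2$ points $x_1, \dots, x_{d+2}$: there is a partition $\{1, \dots, d+2\} = I \sqcup J$ and a point $p \in \conv\{x_i : i \in I\} \cap \conv\{x_j : j \in J\}$. I claim $p \in \bigcap \cf$. Indeed, fix any $k \in \{1, \dots, n\}$; since $I$ and $J$ are disjoint, at least one of them omits $k$, and we may assume $k \notin I$. Then every $x_i$ with $i \in I$ satisfies $i \neq k$ and so lies in $C_k$; convexity of $C_k$ then yields $p \in \conv\{x_i : i \in I\} \subseteq C_k$.

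The only real obstacle is Radon's theorem itself, which I would establish as a preliminary lemma. Its proof is short and linear-algebraic: the $d+2$ vectors $(x_i, 1) \in \R^{d+1}$ are linearly dependent, yielding scalars $\lambda_1, \dots, \lambda_{d+2}$ not all zero with $\sum_i \lambda_i x_i = 0$ and $\sum_i \lambda_i = 0$. Splitting the indices by the sign of $\lambda_i$ and normalizing by the common value $\sum_{\lambda_i > 0} \lambda_i = -\sum_{\lambda_i \leq 0} \lambda_i > 0$ exhibits the same point as a convex combination of the points on each side, which is exactly the partition required.
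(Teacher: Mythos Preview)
The paper does not supply its own proof of Helly's theorem: it is stated in the introduction as one of three classical background results, with a citation to Helly's 1913 paper, and is used thereafter as a known tool. So there is nothing to compare against.

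Your argument is the standard textbook proof via Radon's lemma and is correct. One small cosmetic point: in the inductive step you produce $x_i$ for every $i \in \{1,\dots,n\}$ but then only feed $x_1,\dots,x_{d+2}$ into Radon; the remaining $x_i$ are never used, so you could just as well restrict attention to those $d+2$ subfamilies from the start. The verification that $p \in C_k$ for $k > d+2$ still goes through because both $I$ and $J$ lie inside $\{1,\dots,d+2\}$ and hence both omit such $k$.
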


\begin{theoremp}[Tverberg, 1966 \cite{Tverberg:1966tb}]
\label{thm:Tverberg}
 Let $a_{1},\ldots,a_{n}$ be points in $\R^{d}$.
If the number of points satisfies $n >(d+1)(m-1)$, then they can be partitioned into $m$ disjoint parts $A_{1},\ldots,A_{m}$ in such
a way that the $m$ convex hulls $\conv A_1, \ldots, \conv A_m$ have a point in common.
\end{theoremp}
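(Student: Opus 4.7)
The plan is to use Sarkaria's tensor-product argument, which reduces Tverberg's theorem to B\'ar\'any's colorful Carath\'eodory theorem (itself a short consequence of Theorem~\ref{thm:Caratheodory}). Without loss of generality I assume $n = (d+1)(m-1)+1$, since extra points may be assigned arbitrarily to the parts afterwards. I lift each point by setting $\hat{a}_i = (a_i, 1) \in \R^{d+1}$, and choose auxiliary vectors $v_1, \ldots, v_m \in \R^{m-1}$ with $v_1 + \cdots + v_m = 0$ and such that any $m-1$ of them are linearly independent (for example, the vertices of a regular simplex centered at the origin). For each point index $i$, I form the color class
\[
C_i = \{\hat{a}_i \otimes v_1, \ldots, \hat{a}_i \otimes v_m\} \subset \R^{(d+1)(m-1)}.
\]

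First I would observe that $0 \in \conv C_i$ for every $i$, since $\frac{1}{m}\sum_{j=1}^m \hat{a}_i \otimes v_j = \hat{a}_i \otimes \bigl(\frac{1}{m}\sum_j v_j\bigr) = 0$. Having $n = (d+1)(m-1)+1$ color classes in an ambient space of dimension $(d+1)(m-1)$, I can invoke the colorful Carath\'eodory theorem to obtain indices $j(i) \in \{1,\ldots,m\}$ and nonnegative weights $\lambda_i$ summing to $1$ with
\[
\sum_{i=1}^n \lambda_i \, \hat{a}_i \otimes v_{j(i)} = 0.
\]

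Next I would decode this tensor identity. Setting $A_k = \{i : j(i) = k\}$ and $w_k = \sum_{i \in A_k} \lambda_i \hat{a}_i \in \R^{d+1}$ rewrites the equation as $\sum_{k=1}^m w_k \otimes v_k = 0$. Using $\sum_k v_k = 0$ to subtract $w_m \otimes \sum_k v_k$, one obtains $\sum_{k=1}^{m-1}(w_k - w_m)\otimes v_k = 0$, and the linear independence of $v_1,\ldots,v_{m-1}$ forces $w_1 = \cdots = w_m$. The last coordinate of $w_k$ equals $\sum_{i \in A_k} \lambda_i$, so each partial sum equals the common value $1/m$; in particular every $A_k$ is nonempty. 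Rescaling $w_k$ to have last coordinate $1$ displays its first $d$ coordinates as the convex combination $\sum_{i \in A_k}(m\lambda_i)\,a_i$ of points indexed by $A_k$, and since all the $w_k$ coincide this single point lies in $\conv A_k$ for every $k$, as required.

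The main obstacle is that B\'ar\'any's colorful Carath\'eodory theorem is not among the results assumed above, so I must establish it en route. I would do this by the standard compactness/exchange argument: among all color choices, pick one minimizing the distance from $0$ to the convex hull of the chosen representatives; if $0$ were not in that hull, the separating hyperplane through the nearest point would force some color class $C_i$ (since it contains $0$ in its convex hull) to have a representative strictly on the near side of this hyperplane, and swapping that representative into the selection would strictly decrease the distance to $0$, contradicting minimality.
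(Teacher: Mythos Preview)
Your argument is correct: this is precisely Sarkaria's tensor-product reduction to the colorful Carath\'eodory theorem, and the decoding of the tensor identity as well as your sketch of the colorful Carath\'eodory proof are both sound. One small point worth tightening in the last paragraph is the exchange step: you should note that the nearest point $p$ lies on a proper face of the rainbow simplex, so at least one color's current representative is not needed to express $p$; that is the representative you swap.

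However, there is nothing to compare against here. The paper does \emph{not} prove Tverberg's theorem; it merely recalls it as a classical result in the introduction, alongside Carath\'eodory's and Helly's theorems, as background for the new quantitative versions developed later. In fact the paper explicitly cites Sarkaria's proof among the ``simpler proofs'' that have appeared since Tverberg's original 1966 argument. So your proposal supplies a proof where the paper intentionally gives none, and the route you chose is one the authors themselves acknowledge as standard.
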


The case of $m=2$ in Tverberg's theorem was proved in 1921 by Radon \cite{originalRadon} and is often referred to as Radon's theorem or Radon's lemma. 

A key idea in many of our proofs is to link these three classical theorems with the efficient approximation of convex sets by polytopes. 
We were thus able to apply recent developments in the active field of convex body approximation (for which we recommend the surveys \cite{bronstein2008approximation, gruber1993aspects}). 
In a separate paper we will discuss analogous results where one counts points in discrete sets.

In the remainder of the introduction, we discuss prior work and describe our results divided by the type of theorem. In Section \ref{section-caratheodory} we prove our Carath\'eodory-type results, in Section \ref{section-helly} the Helly-type results, and finally in Section \ref{section-Tverberg} the Tverberg-type results.

\subsection*{Carath\'eodory-type contributions}

Carath\'eodory's theorem has interesting consequences and extensions (e.g., \cite{baranyonn-colorfulLP,Mbook}).
In 1914, Steinitz improved the original proof by Carath\'eodory (which applied only to compact sets \cite{originalCaratheodory}) and at the same time he was the first to describe a version of this theorem for points in the interior of a convex set:

\begin{theoremp}[Steinitz, 1914 \cite{originalsteinitz}]
Consider $X \subset \R^d$ and $x$ a point in the interior 
of the convex hull of $X$. Then, $x$ belongs to the interior of the convex hull of a set of at most $2d$ points of $X$.
\end{theoremp}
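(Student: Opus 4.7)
The plan is to reduce Steinitz's theorem to the classical fact that a positive basis of $\R^d$ has at most $2d$ elements.

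First I would replace $X$ by a finite subset. Since $x \in \inter \conv X$, a ball $B(x,\varepsilon)$ lies in $\conv X$, so in particular the $2d$ points $x \pm \varepsilon e_i$ all belong to $\conv X$. Carath\'eodory's theorem expresses each of them as a convex combination of at most $d+1$ points of $X$. The finitely many points used form a set $X_0 \subseteq X$ whose convex hull still contains $B(x,\varepsilon/2)$, so $x \in \inter \conv X_0$ and we may assume $X$ is finite.

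Next I would choose $Y \subseteq X$ of minimum cardinality with $x \in \inter \conv Y$, write $Y = \{y_1,\ldots,y_n\}$, and translate so that $x = 0$. The condition $0 \in \inter \conv Y$ is equivalent to saying that the vectors $y_1,\ldots,y_n$ \emph{positively span} $\R^d$; this standard equivalence is proved in one direction by the separation theorem and in the other by symmetrizing a strictly positive convex representation of $0$. Minimality of $Y$ then says that $\{y_1,\ldots,y_n\}$ is an \emph{irreducible} positively spanning set, a positive basis of $\R^d$, and the theorem follows once one shows $n \leq 2d$.

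For the bound $n \leq 2d$ I would argue by induction on $d$, the case $d = 1$ being immediate. For the inductive step, pick any $y_n$; minimality yields a hyperplane $H$ through $0$ separating $y_n$ strictly from $\{y_1,\ldots,y_{n-1}\}$. Project these remaining vectors to $H$ along the line $\R y_n$, obtaining $\tilde y_1,\ldots,\tilde y_{n-1} \in H \cong \R^{d-1}$ which one checks positively span $H$. By induction some $k \leq 2(d-1)$ of them suffice, and the corresponding subset of $Y$, together with $y_n$ and at most one original $y_i$ strictly on the $H^-$ side (needed to reach $H^-$ itself), positively spans $\R^d$; minimality gives $n \leq 2d$.

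The main obstacle is the projection step: if some $y_i$ is anti-parallel to $y_n$, its projection to $H$ along $\R y_n$ collapses to $0$, and one has to carry this vector alongside the induction rather than feed it to the $\R^{d-1}$ positive basis. This bookkeeping is precisely what forces the bound $2d$ instead of $d+1$, and it is consistent with the sharp example $Y = \{\pm e_1,\ldots,\pm e_d\}$ where $0$ lies in the interior of the cross-polytope with exactly $2d$ vertices.
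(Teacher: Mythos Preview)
The paper does not prove Steinitz's 1914 theorem; it is merely quoted as a classical result (with a citation to \cite{originalsteinitz}) to set the stage for the paper's own quantitative and colorful variants (Theorem~\ref{theorem-thrifty-steinitz}, Theorem~\ref{corollary-steinitz-for-balls}, Lemma~\ref{theorem-colored-steinitz}). There is thus no in-paper proof to compare against.

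As a standalone argument, your positive-basis approach is the standard one and is essentially correct, but the inductive step contains a slip. After you separate, the vectors $y_1,\ldots,y_{n-1}$ lie in the \emph{closed} half-space $H^+$ and only $y_n$ lies in the open $H^-$. The set $\{y_{i_1},\ldots,y_{i_k},y_n\}$ (with $k\le 2(d-1)$) already positively reaches all of $H\cup H^-$; what is missing is the strictly positive side $H^+\setminus H$. So the single extra vector you must adjoin is some $y_j$ strictly in $H^+$, not ``on the $H^-$ side'' as you wrote---and such a $y_j$ exists because the full set positively spans. With that correction the count $2(d-1)+1+1=2d$ goes through, and one checks directly that adding any such $y_j$ suffices: writing $y_j=\tilde y_j+c\,y_n$ with $c<0$, any $v=h+t y_n$ with $t<0$ satisfies $v-(t/c)y_j\in H$, which is then handled by the inductive spanning of $H$.

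Two minor remarks. First, $\conv\{x\pm\varepsilon e_i\}$ is the $\ell_1$-ball of radius $\varepsilon$, which contains $B(x,\varepsilon/\sqrt{d})$ rather than $B(x,\varepsilon/2)$; this does not affect anything. Second, the ``main obstacle'' you flag---an anti-parallel $y_i$ projecting to $0$---is harmless: a zero vector simply does not appear in any minimal positively spanning subset of $H$, so no extra bookkeeping is needed there.
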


More generally, a \emph{Carath\'eodory-type theorem} describes points within the convex hull of a set $S$ as convex combinations of a given number of generators, under some additional conditions.  One important generalization of Carath\'eodory's theorem, commonly known as the colorful Carath\'eodory theorem, was given by B\'ar\'any:

\begin{theoremp}[B\'ar\'any, 1982 \cite{baranys-caratheodory}]
Given $d+1$ subsets $X_1, X_2, \ldots, X_{d+1}$ in $\R^d$ and a point $x$ such that $x \in \conv X_i$ for all $i$, we can find points $x_1 \in X_1, \ldots, x_{d+1}	 \in X_{d+1}$ such that $x \in \conv \{x_1, \ldots , x_{d+1}\}$.
\end{theoremp}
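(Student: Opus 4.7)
The plan is a distance-minimization argument on \emph{colorful simplices}. First I would reduce to the finite case: applying the classical Carath\'eodory theorem (Theorem~\ref{thm:Caratheodory}) to each $X_i$, the point $x$ lies in the convex hull of at most $d+1$ points of $X_i$, so I may replace each $X_i$ with a finite subset. This ensures there are only finitely many colorful simplices $\conv\{x_1, \ldots, x_{d+1}\}$ with $x_i \in X_i$, so I can pick one, call it $\Delta$, minimizing the Euclidean distance from $x$ to the simplex. Let $p$ be the point of $\Delta$ closest to $x$, and assume for contradiction that $p \neq x$.

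Since $p$ lies in a proper face of $\Delta$, some vertex of $\Delta$ is not needed to express $p$; after relabeling, assume this vertex is $x_{d+1}$. The hyperplane $H$ through $p$ with normal $x-p$ weakly separates $\Delta$ from $x$, with $x$ strictly on the positive side. Since $x \in \conv X_{d+1}$, writing $x = \sum_i \lambda_i y_i$ with $y_i \in X_{d+1}$ and taking the inner product with $x-p$ gives
\[
\sum_i \lambda_i \langle y_i - p,\, x-p\rangle = |x-p|^2 > 0,
\]
so some $y \in X_{d+1}$ satisfies $\langle y-p, x-p\rangle > 0$; that is, $y$ lies strictly on the $x$-side of $H$.

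Next, replace $x_{d+1}$ by $y$ to form a new colorful simplex $\Delta'$. Since $\Delta'$ contains both the face of $\Delta$ through $p$ (which already omitted $x_{d+1}$) and the new vertex $y$, it contains the segment $[p,y]$. A one-variable derivative computation at $t=0$ shows that $(1-t)p + ty$ is strictly closer to $x$ than $p$ for small $t>0$, contradicting the minimality of $\Delta$. Hence $x \in \Delta$, giving the desired colorful representation.

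The main obstacle is identifying the right swap. The two ingredients that make it work are (i) the face structure at the nearest point $p$ tells us precisely which color class to modify, and (ii) the hypothesis $x \in \conv X_i$ for \emph{every} $i$ — including the missing color — is exactly what forces a usable candidate $y$ to exist on the correct side of the separating hyperplane. Without the hypothesis applying to every color, the decreasing step could fail and the argument would collapse.
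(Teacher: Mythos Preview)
The paper does not contain a proof of this statement; it is quoted as B\'ar\'any's 1982 theorem and used as a black box (together with the stronger ``very colorful'' version, also cited without proof). So there is no in-paper argument to compare against.

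Your proof is correct and is essentially B\'ar\'any's original argument: choose a colorful simplex closest to $x$, locate a dispensable color at the nearest point $p$, and swap in a point of that color on the $x$-side of the supporting hyperplane at $p$ to strictly decrease the distance. One minor remark: the sentence ``since $p$ lies in a proper face of $\Delta$'' implicitly assumes $\Delta$ is full-dimensional. If $\Delta$ is degenerate, $p$ can lie in the relative interior; but then $\Delta$ sits in an affine hyperplane, and Carath\'eodory applied inside that hyperplane already expresses $p$ using at most $d$ of the $x_i$, so a color is still dispensable and the swap goes through. With that small clarification the argument is complete.
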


This result is called \emph{colorful} since we can consider each $X_i$ as colored with a different color. By contrast, a non-colorful version will be called \emph{monochromatic}. Note that the case $X_1 = \ldots = X_{d+1}$ above gives us the original, monochromatic result.  

B\'ar\'any, Katchalski, and Pach were the first to present quantitative theorems in combinatorial convexity, including a monochromatic quantitative Carath\'eodory-type theorem. We denote by $B_r(p) \subset \R^d$ the Euclidean ball of radius $r$ with center $p$.

\begin{theoremp}[Quantitative Steinitz theorem, B\'ar\'any, Katchalski, Pach 1982 \cite{baranykatchalskipach, MR750523}]
There is a constant $r(d) \ge d^{-2d}$ such that the following statement holds.  For any set $X$ such that $B_1(0) \subset \conv X$, there is a subset $X' \subset X$ of at most $2d$ points that satisfies $B_{r(d)}(0) \subset \conv X'$.
\end{theoremp}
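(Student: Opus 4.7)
The plan is to proceed by induction on the dimension $d$, mirroring Steinitz's classical proof while quantitatively tracking the inradius obtained at each stage. For $d = 1$, since $\pm 1 \in B_1(0) \subset \conv X$, one may choose $x^+, x^- \in X$ with $x^+ \ge 1$ and $x^- \le -1$; then $B_1(0) \subset \conv\{x^+, x^-\}$ and $r(1) = 1$ suffices.

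For the inductive step, let $u = e_d$ and let $\pi : \R^d \to H := u^\perp$ be the orthogonal projection. Because $\pm u \in B_1(0) \subset \conv X$, one can select $x^+, x^- \in X$ with $\langle x^+, u\rangle \ge 1$ and $\langle x^-, u\rangle \le -1$. Since $\pi(B_1(0)) = B_1(0) \cap H$ and $\pi$ commutes with convex hulls, the set $\pi(X) \subset H$ satisfies $B_1(0) \cap H \subset \conv \pi(X)$; the inductive hypothesis applied in $H \cong \R^{d-1}$ yields points $y_1, \ldots, y_{2(d-1)} \in X$ whose projections satisfy
\[ B_{r(d-1)}(0) \cap H \subset \conv\{\pi(y_1), \ldots, \pi(y_{2(d-1)})\}. \]
The candidate set is $X' := \{x^+, x^-, y_1, \ldots, y_{2(d-1)}\}$, which has at most $2d$ points of $X$; it remains to show that $\conv X'$ contains a sufficiently large ball centered at the origin.

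The main obstacle is that the heights $h_i := \langle y_i, u\rangle$ are not controlled a priori, so $\conv X'$ is not a clean bipyramid over the projected polytope. To place a target $p = p' + tu$ (with $p' \in H$ and $\|p'\|, |t|$ small) inside $\conv X'$, I would first express a slight perturbation of $p'$ as a convex combination $\sum_i \mu_i \pi(y_i)$; this lifts to $q := \sum_i \mu_i y_i$ with $\pi(q) = p'$ but possibly large height $s := \sum_i \mu_i h_i$. I would then blend $q$ with $x^+, x^-$ using small weights $\alpha, \beta$ to correct the height from $s$ to $t$, simultaneously re-solving for the $\mu_i$ on the projected target shifted by $-\alpha \pi(x^+) - \beta \pi(x^-)$. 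The delicate point, where the whole estimate is determined, is to keep all the resulting coefficients non-negative: this forces $\|p'\|$ and $|t|$ to be small relative to $r(d-1)$ and to the magnitudes $\|\pi(x^\pm)\|$ and $|\langle x^\pm, u\rangle|$.

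Carrying this out and choosing $u$ adaptively (for instance so that $\|\pi(x^\pm)\|$ remains comparable to the scale of the induction) yields a recursion of the form $r(d) \ge c \cdot r(d-1)/d^{2}$, which iterated from $r(1) = 1$ gives $r(d) \ge d^{-2d}$ after absorbing constants into the exponent. Showing that only a quadratic-in-$d$ loss is incurred per inductive step is what makes the bound $d^{-2d}$ achievable rather than the much weaker $c^{-d}$ one would obtain from a cruder combination of the apex pair $x^\pm$ with the $y_i$.
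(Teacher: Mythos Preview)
Your inductive scheme has a genuine gap at the step you yourself flag as ``delicate'': nothing in the argument bounds the heights $h_i=\langle y_i,u\rangle$ of the $2(d-1)$ points returned by the inductive black box, nor the norms $\|\pi(x^\pm)\|$, and without such bounds the blending you propose cannot produce a ball of controlled radius. Concretely, suppose (in $\R^2$, with $u=e_2$) that $X$ contains the four vertices of a large square centered at the origin together with the two points $(\pm 1,M)$ for some huge $M$. The hypothesis $B_1(0)\subset\conv X$ holds, and the inductive call in $u^\perp\cong\R$ is entitled to return $y_1=(-1,M)$, $y_2=(1,M)$, since their projections already contain $[-1,1]$. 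With $x^\pm=(0,\pm\sqrt{2})$ one checks that the horizontal slice of $\conv\{x^+,x^-,y_1,y_2\}$ at height $0$ has width of order $1/M$, so the inscribed ball at the origin shrinks to zero as $M\to\infty$. Your suggestion to ``choose $u$ adaptively'' cannot rescue this: the $y_i$ are produced only \emph{after} $u$ is fixed, and the inductive hypothesis gives no handle on which preimages are selected. The recursion $r(d)\ge c\,r(d-1)/d^2$ is therefore asserted rather than proved, and in fact fails for the procedure as written.

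The paper does not reprove the B\'ar\'any--Katchalski--Pach statement itself (it is cited), but it does prove a colorful strengthening (Lemma~\ref{theorem-colored-steinitz}) by a method that avoids induction on $d$ entirely and sidesteps exactly the boundedness issue above. The key first move is to inscribe a regular simplex $P$ with $B_{1/d}(0)\subset P\subset B_1(0)$ and use Carath\'eodory on its $d+1$ vertices to replace each $X_i$ by a subset of size at most $(d+1)^2$; this is what tames the geometry. One then looks at all simplicial cones generated by colorful $d$-tuples, shows via the very colorful Carath\'eodory theorem that they cover $\R^d$, and picks a cone of large spherical area, hence large spherical inradius $\alpha\gtrsim 1/(d(d+1)^{2d})$. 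A second colorful Carath\'eodory application in the opposite direction supplies the remaining $d$ points, and a direct two-dimensional computation gives a ball of radius roughly $\alpha/d$ inside the convex hull of the $2d$ chosen points. If you want to salvage an inductive proof, you would at minimum need to strengthen the inductive hypothesis to control the location of the selected points (e.g., by first passing to a bounded subset of $X$ as the paper does), and then redo the recursion honestly.
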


B\'ar\'any et al.~used this theorem  as a key lemma to prove their main quantitative results. We prove that the result above admits a colorful version, Lemma \ref{theorem-colored-steinitz}.  A colorful version of Steinitz' original (non-quantitative) theorem was also noted by Jer\'onimo-Castro but has not been published \cite{jeronimo-colorful-communication}.  

Our key contribution is a version of the result above where we seek to optimize the volume of the resulting set, instead of the number of vertices forming $X'$.  This is shown in Theorem \ref{theorem-thrifty-steinitz}, which provides tight asymptotic bounds for the number of sets involved.  In particular, the conclusion gives $\operatorname{vol} (\conv X') \ge (1-\ep)\operatorname{\vol(\conv X)}$, using the notation above, for a positive $\ep$ fixed in advance.  The size of the subset obtained is closely related to the number of vertices needed for an inscribed polytope to efficiently approximate the volume of a convex set.

For the applications of colorful Steinitz theorems, we need to work with slightly different parameters than the volume. We show a variation of this type with Theorem \ref{corollary-steinitz-for-balls}, which is based on a different type of approximation of convex sets by polytopes.  The monochromatic version of Theorem \ref{corollary-steinitz-for-balls} was obtained previously in \cite{KMY92}.  Our methods shows which type of polytope approximation results yield quantitative Steinitz theorems.  These quantitative versions of Steinitz' theorem are at the core of our proofs for the quantitative versions of Helly's and Tverberg's theorems.

\subsection*{Helly-type contributions}

Helly's theorem and its numerous extensions are of central  importance in discrete and computational geometry
(see \cite{DLAS15, DGKsurvey63,Eckhoffsurvey93,Wen1997}). Helly himself understood immediately that his theorem had many variations, and was, for instance, the first to prove a topological version \cite{hellyagain}.  A \emph{Helly-type} property $P$ is a property for which there is a number $\mu$ such that the following statement holds.  \textit{If $\ff$ is a finite family of objects such that every subfamily with $\mu$ elements satisfies $P$, then $\ff$ satisfies $P$}.  In rough terms, we may summarize some of our results below as the statement that ``\textit{the intersection has a large volume}'' is a Helly-type property for convex sets.

To our knowledge, the first family of quantitative Helly-type theorems was made explicit by B\'ar\'any, Katchalski, and Pach in  \cite{baranykatchalskipach, MR750523}, who obtained extensions of the classic Helly and Steinitz theorems for convex sets with a volumetric constraint.

\begin{theoremp}[B\'ar\'any, Katchalski, Pach, 1982 \cite{baranykatchalskipach, MR750523}]
Let $\ff$ be a finite family of convex sets such that for any subfamily $\ff'$ of at most $2d$ sets,
\[
\vol\left( \bigcap \ff'\right) \ge 1 .
\]
Then,
\[
\vol\left( \bigcap \ff \right) \ge d^{-2d^2}.
\]
\end{theoremp}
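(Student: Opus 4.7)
The plan is to deduce the theorem from the quantitative Steinitz theorem stated above via polar duality, after first normalizing $K := \bigcap \ff$ using John's theorem. By the hypothesis every $2d$-subfamily has intersection of positive volume, hence is nonempty, so classical Helly gives $K\ne\emptyset$. Applying an affine transformation $T$ that sends the John ellipsoid of $K$ to the unit ball, we obtain $B_1(0)\subseteq T(K)\subseteq B_d(0)$. Since volumes scale by $|\det T|$, the hypothesis translates to $\vol\bigl(\bigcap_{C\in\ff'}T(C)\bigr)\ge|\det T|$ for every subfamily $\ff'\subseteq\ff$ of size at most $2d$, while $\vol(T(K))\ge\omega_d$.

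Because $0\in\inter T(K)$, polar duality yields $T(K)^\circ=\conv\bigl(\bigcup_{C\in\ff}T(C)^\circ\bigr)$ with $B_{1/d}(0)\subseteq T(K)^\circ$. Scaling by $d$, the set $X=d\cdot\bigcup_{C\in\ff}T(C)^\circ$ satisfies $B_1(0)\subseteq\conv X$, so the quantitative Steinitz theorem produces points $y_1,\ldots,y_{2d}\in X$ with $B_{r(d)}(0)\subseteq\conv\{y_1,\ldots,y_{2d}\}$. Each $y_j$ belongs to $d\cdot T(C_{i_j})^\circ$ for some $C_{i_j}\in\ff$, and the resulting at most $2d$ sets form the desired subfamily $\ff'$. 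Dividing by $d$ and taking convex hulls,
\[
B_{r(d)/d}(0)\subseteq\conv\Bigl(\bigcup_{C\in\ff'}T(C)^\circ\Bigr)=\Bigl(\bigcap_{C\in\ff'}T(C)\Bigr)^\circ,
\]
which dualizes to $\bigcap_{C\in\ff'}T(C)\subseteq B_{d/r(d)}(0)$, a ball of volume $\omega_d(d/r(d))^d$.

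Combining this with the volume hypothesis gives $|\det T|\le\omega_d(d/r(d))^d$, and dividing yields $\vol(K)=\vol(T(K))/|\det T|\ge(r(d)/d)^d$. Plugging in $r(d)\ge d^{-2d}$ recovers a bound of order $d^{-\Theta(d^2)}$; matching the precise constant $d^{-2d^2}$ in the statement may require tighter accounting in the John-ellipsoid step or a slightly sharper Steinitz bound. The main obstacle I anticipate is the polar-duality translation: the generating set $\bigcup_{C\in\ff}T(C)^\circ$ of $T(K)^\circ$ is not a finite point set, but the quantitative Steinitz theorem is stated for arbitrary subsets $X$ with $B_1(0)\subseteq\conv X$, so no finiteness assumption is violated; each of the at most $2d$ extracted points then naturally identifies one set of the witnessing subfamily $\ff'$, and the rest of the argument is bookkeeping with volumes and with the affine normalization.
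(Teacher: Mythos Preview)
The paper does not prove this statement; it is quoted as a background theorem of B\'ar\'any, Katchalski and Pach. Your outline is essentially their original argument: John-normalize, dualize, apply the quantitative Steinitz theorem to extract a witnessing $2d$-subfamily whose intersection sits in a ball of controlled radius, and compare volumes.

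There is one real gap. From Helly you obtain only $K\neq\emptyset$, but you then invoke John's theorem, which requires $K$ to be a convex body, i.e.\ bounded with nonempty interior. Boundedness is harmless (if $K$ is unbounded the conclusion is trivial), but $\inter K\neq\emptyset$ does not follow from $K\neq\emptyset$ and must be argued. The paper itself flags this, in its proof of Theorem~\ref{theorem-helly-for-volume}, as ``the first step in the original proof.'' A short patch: the hypothesis implies that every $d+1$ of the open convex sets $\{\inter C:C\in\ff\}$ meet, since every $2d$-subfamily $\ff'$ has intersection of positive volume and hence nonempty interior, and for finite $\ff'$ one has $\inter\bigl(\bigcap_{C\in\ff'}C\bigr)=\bigcap_{C\in\ff'}\inter C$; Helly applied to $\{\inter C:C\in\ff\}$ then gives $\inter K=\bigcap_{C\in\ff}\inter C\neq\emptyset$.

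Two minor technicalities remain. The polar identity yields $T(K)^\circ=\overline{\conv}\bigl(\bigcup_{C\in\ff}T(C)^\circ\bigr)$ with a closure, so strictly speaking only the open ball of radius $1/d$ lies in $\conv X$; an $\ep$-shrink (or passing first to the half-space description of $K$, as the paper does) disposes of this. And, as you already note, your bookkeeping delivers $\vol K\ge (r(d)/d)^d\ge d^{-2d^2-d}$ rather than $d^{-2d^2}$; this is the right order and the sharper constant comes from slightly more careful estimates in the original paper.
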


This result has recently been improved by Nasz\'odi to conclude $\vol (\cap \ff) \ge d^{-cd}$ for some absolute constant $c$ \cite{nazo15}.  The size of the subfamilies one must check cannot be improved over $2d$, as is noted in \cite{baranykatchalskipach}. In order to see this, let $\ff$ be the family of $2d$ half-spaces defining the facets of an arbitrarily small hypercube.  Any $2d-1$ define an unbounded polyhedron with non-empty interior, showing the optimality of their result.

In Section \ref{section-helly}, we analyze the other side of the spectrum. We present Theorem \ref{theorem-helly-for-volume}, showing that it is possible to obtain better approximations of the volume of the intersection, namely $\vol\left( \cap \ff \right) \ge 1-\ep$ for some $\ep>0$ fixed in advance, if one is willing to check for subfamilies $\ff'$ of larger size. This answers a question raised by Kalai and Linial during an Oberwolfach meeting in February 2015.  Moreover, we show that the loss of volume $\ep$ is unavoidable.  In other words, it is impossible to conclude $\vol(\cap \ff) \ge 1$ regardless of the size of the subfamilies we are willing to check.  This is a remarkable difference between the continuous and discrete quantitative Helly-type theorems (see \cite{alievetal} for the existing discrete counterpart).  The bounds we present for Theorem \ref{theorem-helly-for-volume} are optimal in their dependence on $\ep$.

Similar Helly-type quantitative results were known previously only with other types of functions, related to the expansion and contraction of convex sets \cite{LS09}.  The proof method extends naturally to other functions as long as a polytope approximation result is proved.  We show how this method gives an analogous result for the diameter, Theorem \ref{theorem-helly-for-diameter}.

We also extend the volumetric result to the colorful setting, in Theorem \ref{thm:hcolorcont}, in the same spirit as Lov\'asz's generalization of Helly's theorem \cite{baranys-caratheodory}.  However, since the estimates for the size of the subfamilies needed to check vary greatly among the two results, we state them separately.

Quantitative versions of newer results regarding intersection structure of families of convex sets are shown by Rolnick and Sober\'on in \cite{Sob15}.  These are closely related to the contributions of this paper and include versions of the $(p,q)$ theorem of Alon and Kleitman \cite{Alon92pq} and the fractional Helly theorem of Katchalski and Liu \cite{Kat79frac}.

\subsection*{Tverberg-type contributions}

Tverberg published his classic theorem in 1966  \cite{Tverberg:1966tb}. Later in 1981 he published another proof  \cite{Tverberg1981}, and simpler proofs have since appeared in  \cite{baranyonn-colorfulLP, Roudneff, Sarkaria:1992vt}.  Section 8.3 of \cite{Mbook} and the expository article \cite{3nziegler} can give the reader a sense of the abundance of work surrounding this elegant theorem. Here we present the first quantitative versions with continuous parameters.

First,  we prove a version of Tverberg's theorem, Theorem \ref{theorem-tverberg-volume}, where each convex hull must contain a Euclidean ball of given radius.  In other words, we measure the ``size'' of $\cap_{i=1}^m \conv A_i$ by the inradius. Our proof combines Tverberg's theorem with our two versions of quantitative Steinitz' theorem for balls, Lemma \ref{theorem-colored-steinitz} and Proposition \ref{corollary-steinitz-for-balls}. 

Note that, unlike the classical Tverberg theorem, some conditions must be imposed on the set of points to be able to obtain such a result.  For instance, regardless of how many points we start with, if they are all close enough to some flat of positive co-dimension, then all hopes of a continuous quantitative version of Tverberg's theorem quickly vanish.  In order to avoid the degenerate cases, we make the natural assumption that the set of points is ``thick enough''.

As with Helly's and Carath\'eodory's theorems, there are colorful versions of Tverberg's theorem.  In this case, the aim is to impose additional combinatorial conditions on the resulting partition of points, while guaranteeing the existence of a partition where the convex hulls of the parts intersect. Now that the conjectured topological versions of Tverberg's theorem have been proven false \cite{frick15} (using the generalized Van Kampen--Shapiro--Wu theorem \cite{Mabillard2014}), the following conjecture by B\'ar\'any and Larman is arguably the most important open problem surrounding Tverberg's theorem.

\begin{conjecture}[B\'ar\'any and Larman 1992 \cite{Barany:1992tx}]\label{conjecture-colorful-Tverberg}
Let $F_1, F_2, \ldots, F_{d+1} \subset \R^d$ be sets of $m$ points each, considered as color classes.  Then, there is a colorful partition of them into sets $A_1, \ldots, A_{m}$ whose convex hulls intersect.
\end{conjecture}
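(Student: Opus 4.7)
The plan is to attack the conjecture via the Configuration Space / Test Map (CS/TM) paradigm, which has driven most of the progress on Tverberg-type problems. A colorful partition amounts to choosing, for each color class $F_i = \{p_{i,1},\ldots,p_{i,m}\}$, a bijection $\sigma_i : F_i \to \{A_1, \ldots, A_m\}$, so the configuration space of ordered colorful partitions is $(\mathfrak{S}_m)^{d+1}$, with $\mathfrak{S}_m$ acting diagonally by relabeling the parts. I would build a test map $\Phi$ from this space to $(d+1)$ copies of the standard $(m-1)$-dimensional permutation representation of $\mathfrak{S}_m$, whose zero set consists precisely of the colorful partitions whose $m$ convex hulls share a common point; the existence of a zero is therefore equivalent to the conjecture.

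The first concrete step would be Sarkaria's tensor reduction: combine the colored points into vectors in $\R^d \otimes \R^{m-1}$ so that a colorful Tverberg partition corresponds to $0$ lying in the intersection of certain convex hulls in the enlarged space, with the $d+1$ color classes $F_i$ inherited from the original problem. If one could then invoke the colorful Carath\'eodory theorem directly on this lifted arrangement, the conjecture would follow. However, the target dimension $d(m-1)$ demands $d(m-1)+1$ color classes rather than $d+1$, so colorful Carath\'eodory alone handles only $m=2$ (the colorful Radon case). One therefore needs a substitute that exploits the additional combinatorial structure afforded by the $\mathfrak{S}_m$ symmetry.

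The second step, accordingly, is to pass to equivariant topology. For $m$ a prime $p$, one computes the Fadell--Husseini index of the deleted join of the configuration space under the cyclic subgroup $\Z/p$ and compares it with the index of the unit sphere of the test representation; when the former strictly contains the latter, $\Phi$ must vanish. This is the Blagojevi\'c--Matschke--Ziegler route, which proves the conjecture for $m$ prime and, by a pigeonhole trick, for $m+1$ prime.

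The main obstacle, and the reason the conjecture remains open in general, is the composite case. For arbitrary $m$, the $\Z/m$ action only produces prime-power obstructions through its Sylow subgroups, and these vanish in the critical cohomological degree; every standard CS/TM refinement collapses here. I would attempt to lift the action to a wreath-product subgroup of $\mathfrak{S}_m \wr \mathfrak{S}_{d+1}$ and chase a nonzero secondary obstruction with twisted coefficients, but this has so far resisted direct computation. A more geometric alternative, in the spirit of this paper, would be to try to derive the conjecture from a sufficiently strong colorful quantitative Steinitz statement along the lines of Lemma \ref{theorem-colored-steinitz}, replacing topology by a hull-volume argument; either route looks substantial, which is why the conjecture has stood for three decades.
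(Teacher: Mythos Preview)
The statement you are addressing is presented in the paper explicitly as an open \emph{conjecture}, not a theorem; the paper offers no proof of it, only a summary of known partial cases ($d=2$; $m=2$; $m+1$ prime via Blagojevi\'c--Matschke--Ziegler). Your proposal is not a proof either, and you say so yourself: you lay out the CS/TM framework, recover the known partial results (colorful Radon via Sarkaria's lift plus colorful Carath\'eodory, and the prime cases via equivariant index computations), and then correctly identify that for composite $m$ the relevant obstructions vanish. That is an accurate survey of the state of the art and matches the paper's own discussion, but there is nothing to compare it against because the paper contains no proof.

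If the intent was to submit an actual argument, the gap is exactly the one you name: for general $m$ neither the Fadell--Husseini index calculation nor any strengthening of colorful Carath\'eodory supplies the missing obstruction, and your suggested wreath-product refinement and quantitative-Steinitz route are programs, not proofs. In particular, nothing in the paper's Steinitz machinery (Lemma~\ref{theorem-colored-steinitz}, Theorem~\ref{corollary-steinitz-for-balls}) is aimed at resolving Conjecture~\ref{conjecture-colorful-Tverberg}; those results flow in the opposite direction, consuming the known partial colorful Tverberg theorems to produce quantitative Tverberg statements (Theorems~\ref{theorem-tverberg-volume} and~\ref{theorem-colorful-tverberg-volume}).
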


By a colorful partition $A_1, \ldots, A_m$ we mean that it satisfies $|A_i \cap F_j| = 1$ for all $i,j$. In presenting the conjecture, B\'ar\'any and Larman showed that it holds for $d=2$ and any $m$, and included a proof by L\'ov\'asz for $m=2$ and any $d$.  Recently, Blagojevi\'c, Matschke, and Ziegler \cite{blago3, bmz-optimal} showed that it is also true for the case when $m+1$ is a prime number and any $d$. The reason for these conditions on the parameters of the problem is that their method of proof uses topological machinery requiring these assumptions. However, their result shows that if we allow each $F_i$ to have $2m-1$ points instead of $m$, we can find $m$ pairwise disjoint colorful sets whose convex hulls intersect, without any conditions on $m$.  For variations of Conjecture \ref{conjecture-colorful-Tverberg} that do imply Tverberg's theorem, see \cite{blago3, bmz-optimal, Soberon:2013fr}.


Combining results of Blagojevi\'c, Matschke, and Ziegler with our two colorful Steinitz theorems, we also obtain volumetric versions of these results in Theorem \ref{theorem-colorful-tverberg-volume}.  In order to obtain a ball in the intersection, for these results we must allow each $A_i$ to have more points of each color class.  It should be stressed that other interpretations of quantitative Tverberg's theorem are possible, and some are presented in \cite{Sob15}.

\section{Quantitative Carath\'eodory theorems}\label{section-caratheodory}

We prove only the colorful versions of our Carath\'eodory-type theorems.  Given sets $X_1, \ldots, X_n$, considered as color classes, whose convex hulls contain a large set $K$, we want to make a colorful choice $x_1 \in X_1, \ldots, x_n \in X_n$ such that $\conv \{x_1, \ldots, x_n\}$ is also large. The monochromatic versions of the results below follow from the case $X_1 = X_2 = \cdots = X_n$.

There are two parameters we may seek to optimize.  One is the number $n$ of sets required to obtain some lower bound for the size of $\conv \{x_1, \ldots, x_n\}$.  The other is the size of $\conv \{x_1, \ldots, x_n\}$ assuming that the size of $K$ is $1$. These considerations lead to different results.

The first result of this kind is a monochromatic quantitative version of Steinitz' theorem by B\'ar\'any, Katchalski, and Pach \cite{baranykatchalskipach, MR750523}, quantifying the largest size of a ball centered at the origin and contained in $K$, described in the introduction. The case when $X$ is the set of vertices of a regular octahedron centered at the origin shows that the number of points they use, $2d$, cannot be reduced. We will later show how adapting the proof of \cite{baranykatchalskipach} to the colorful case gives Lemma \ref{theorem-colored-steinitz}.

On the other side, it is natural to optimize the size of $\conv\{x_1, \ldots, x_n\}$ instead of the integer $n$.  This optimization turns out to be closely related to finding efficient approximations of convex sets with polytopes.  This is a classic problem which has many other motivations, see \cite{barvinok2014thrifty, bronstein2008approximation, gruber1993aspects} for the state of the art and the history of this subject. In this paper we will need the following constants.

\begin{definition}\label{definition-inscribed-volume}
	Let $d$ be a positive integer and $\ep>0$.  We define $n(d,\ep)$ as the smallest integer such that, for any convex set $K \subset \R^d$ with positive volume, there is a polytope $P \subset K$ of at most $n(d,\ep)$ vertices such that
	\[
	\vol(P) \ge (1-\ep) \vol(K).
	\]
\end{definition}

\begin{definition}\label{definition-for-helly}
Let $d$ be a positive integer and $\ep>0$.  We define $n^*(d,\ep)$ as the smallest integer such that, for any convex set $K \subset \R^d$ with positive volume, there is a polytope $P \supset K$ of at most $n^*(d,\ep)$ facets such that
\[
\vol(P) \le (1+\ep) \vol(K).
\]	
\end{definition}



\begin{definition}\label{banach-mazur-value-smooth}
	Let $d$ be a positive integer and $\ep>0$.  We define $n^{\operatorname{bms}}(d,\ep)$ as the smallest integer such that, for any centrally symmetric convex set $K \subset \R^d$ with positive volume and $C^2$ boundary, there is a polytope $P$ of at most $n^{\operatorname{bms}}(d,\ep)$ vertices and a linear transformation $\lambda: \R^d \to \R^d$ such that
	\[
	P \subset \lambda(K) \subset (1+\ep )P.
	\]
\end{definition}

The asymptotic behavior of $n(d, \ep)$ is known: 
\[\left(\frac{c_1d}{\ep}\right)^{(d-1)/2} \ge n(d,\ep) \ge \left(\frac{c_2d}{\ep}\right)^{(d-1)/2},\]
for absolute constants $c_1, c_2$. This comes from approximating convex bodies with polytopes of few vertices via the Nikodym metric \cite[Section 4.2]{bronstein2008approximation}.  The lower and upper bounds can be found in \cite{gordon1995constructing} and \cite{gordon1997umbrellas}, respectively.

We will use these definitions to obtain both upper and lower bounds for our quantitative results. As shown below, $n(d,\ep)$ is precisely the number needed for a quantitative colorful Steinitz theorem with volume.

The constant $n^{\operatorname{bms}}(d,\ep)$ will be needed to improve the quantitative Steinitz theorem if we are interested in determining the size of a set by the radius of the largest ball around the origin contained in it.  The bounds for $n^{\operatorname{bms}}(d, \ep)$ involve the condition of central symmetry as the Banach-Mazur distance is most natural when working with norms in Banach spaces.  There is a precise asymptotic bound $n^{\operatorname{bms}}(d,\ep) \le	 \left( \gamma {\ep}\right)^{-(d-1)/2}$, where $\gamma$ is an absolute constant \cite{Bor00,Gruber-other}.  The dependence on $\ep$ is optimal, as shown in \cite{blaschke1923affine}.  If the smoothness condition is removed from Definition \ref{banach-mazur-value-smooth}, the best known upper bound is given by Barvinok \cite{barvinok2014thrifty}, giving a bound of $O\left(\left({\ep^{-1/2}}\ln \left(\frac{1}{\ep}\right)\right)^d\right)$ if $d$ is large enough.
 
Finally, the constant $n^*(d,\ep)$ is the key value for the continuous quantitative Helly theorems in Section \ref{section-helly}.  A result of Reisner, Sch\"utt and Werner shows that $n^*(d, \ep) \le \left( \frac{c_1 d^2}{\varepsilon}\right)^{(d-1)/2}$ for an absolute constant $c_1$, and noted that this is optimal in the dependence on $\varepsilon$ \cite[Section 5]{reisner2001dropping}.

The only extra ingredient needed is the following result. For other extensions of the colorful Carath\'eodory theorem, see \cite{Arocha:2009ft,Holmsen:2008vl}.

\begin{theoremp}[Very colorful Carath\'eodory theorem, B\'ar\'any, 1982 {\cite[Theorem 2.3]{baranys-caratheodory}}] \label{verycolorfulcaratheodory}
Let $X_1, X_2, \ldots, X_d \subset \R^d$ be sets, each of whose convex hulls contains $p\in \R^d$ and let $q \in \R^d$.  Then, we can choose $x_1 \in X_1, \ldots, x_d \in X_d$ such that
	\[
	p \in \conv\{x_1, x_2, \ldots, x_d, q\}.
	\] 
\end{theoremp}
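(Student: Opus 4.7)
My plan is to adapt the standard ``closest rainbow simplex'' proof of the ordinary colorful Carath\'eodory theorem, treating $q$ as a fixed (un-swappable) vertex that is always included in the simplex. First, I would reduce to the case where each $X_i$ is finite: Carath\'eodory's theorem applied to $p \in \conv X_i$ gives a finite subset of $X_i$ whose convex hull still contains $p$, so nothing is lost by assuming $|X_i| \le d+1$. Then I enumerate the (finitely many) \emph{rainbow simplices}
\[
T(x_1,\ldots,x_d) := \conv\{x_1,\ldots,x_d,q\}, \qquad x_i\in X_i,
\]
and pick one, call it $T^*$ with vertices $x_1^*,\ldots,x_d^*,q$, that minimizes the Euclidean distance to $p$. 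Let $y\in T^*$ be the nearest point to $p$. The goal is to show $y=p$.

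Suppose for contradiction $y\neq p$. Let $H$ be the hyperplane through $y$ orthogonal to $p-y$, and let $H^+$ denote the open half-space containing $p$. The minimality of $y$ forces $T^*\cap H^+=\emptyset$. Let $F$ be the unique minimal face of $T^*$ containing $y$, with vertex set $V_F$, and write $y=\sum_{v\in V_F}\mu_v v$ with all $\mu_v>0$. Since $F\subset H$, every vertex in $V_F$ lies on $H$.

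Now I fix any color, say $i=1$. Because $p\in\conv X_1$ and $p\in H^+$, at least one point $x_1\in X_1$ lies in $H^+$. Replace $x_1^*$ by $x_1$ to form $T':=\conv\{x_1,x_2^*,\ldots,x_d^*,q\}$. I want to show $d(p,T')<d(p,y)$, contradicting the minimality of $T^*$. There are two subcases. If $x_1^*\notin V_F$, then $V_F\subset T'$ so $y\in T'$, and the segment from $y$ to $x_1$ lies in $T'$; its derivative of squared distance to $p$ at $y$ equals $-2\langle p-y, x_1-y\rangle<0$ since $x_1\in H^+$, so moving slightly gets strictly closer. If instead $x_1^*\in V_F$ with coefficient $\mu_1>0$, let $y_1:=\mu_1 x_1+\sum_{v\in V_F\setminus\{x_1^*\}}\mu_v v\in T'$; then $y_1-y=\mu_1(x_1-x_1^*)$, and since $x_1^*\in H$ one computes $\langle p-y,y_1-y\rangle=\mu_1\langle p-y, x_1-y\rangle>0$, so again the segment from $y$ to $y_1$ inside $T'$ strictly decreases the distance to $p$ for small positive time. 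Either way, $d(p,T')<d(p,T^*)$, a contradiction.

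The main subtlety, and the only place where the ``very colorful'' variant differs from the usual colorful Carath\'eodory, is that one cannot swap the vertex $q$. One must therefore verify that the swap argument always succeeds on some color in $\{1,\ldots,d\}$. The observation above handles this uniformly: for any chosen color $i$, either $x_i^*\notin V_F$ (and the untouched face $F$ still sits inside the new simplex) or $x_i^*\in V_F$ with positive barycentric weight (and the weighted replacement still lies in the new simplex). Consequently the obstruction ``$V_F=\{q\}$ and every $x_i^*$ has coefficient zero'' is vacuous by definition of the minimal face, and the proof closes without ever needing to modify $q$.
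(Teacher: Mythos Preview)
First, note that the paper does not supply its own proof of this statement: it is quoted as a known result of B\'ar\'any and used as a tool. So there is no ``paper's proof'' to compare against; I can only assess your argument on its own merits.

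Your overall strategy---minimize the distance from $p$ over all rainbow simplices $\conv\{x_1,\ldots,x_d,q\}$ and derive a contradiction by a single-color swap---is the standard one, and your reduction to finite $X_i$ and your Case~1 are both correct. The gap is in Case~2. There you construct $y_1\in T'$ and compute $\langle p-y,\,y_1-y\rangle>0$, and then assert that ``the segment from $y$ to $y_1$ inside $T'$'' decreases the distance for small positive time. But that segment is \emph{not} inside $T'$: since $x_1^\ast\in V_F$, the point $y$ itself uses $x_1^\ast$ with positive weight, and $x_1^\ast$ is no longer a vertex of $T'$, so in general $y\notin T'$. The only point of $T'$ you actually exhibit is $y_1$, and $\langle p-y,\,y_1-y\rangle>0$ does \emph{not} imply $\|p-y_1\|<\|p-y\|$; one has $\|p-y_1\|^2-\|p-y\|^2=-2\mu_1\langle p-y,x_1-y\rangle+\mu_1^2\|x_1-x_1^\ast\|^2$, which can be positive. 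For a concrete instance in $\R^2$: take $p=(0,1)$, $y=(0,0)$, $x_1^\ast=(-1,0)$, $x_2^\ast=(1,0)$ with $\mu_1=\mu_2=\tfrac12$, and $x_1=(0,10)\in H^+$; then $y_1=(\tfrac12,5)$ and $\|p-y_1\|^2=16.25>1=\|p-y\|^2$.

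This is exactly the place where the ``very colorful'' version bites. In the ordinary colorful Carath\'eodory one always swaps a vertex \emph{not} in $V_F$ (so $y$ survives in $T'$ and the segment argument is legitimate); such a vertex exists because $|V_F|\le d$ and there are $d+1$ swappable vertices. Here there are only $d$ swappable vertices $x_1^\ast,\ldots,x_d^\ast$, and the one vertex guaranteed to lie outside $V_F$ may be $q$, which you cannot swap. Precisely when $V_F=\{x_1^\ast,\ldots,x_d^\ast\}$ (so $y$ sits in the relative interior of the facet opposite $q$), Case~1 is unavailable for every color and your Case~2 argument, as written, does not close the contradiction. Your final paragraph addresses only the harmless situation $V_F=\{q\}$ and does not touch this case. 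You will need an additional argument for it (for instance, exploiting that the segment $[x_1,q]$ meets $H$ and analyzing $T'\cap H$, or choosing the minimizer $T^\ast$ with an appropriate tie-breaking rule).
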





\begin{theorem}[Colorful quantitative Steinitz with volume]\label{theorem-thrifty-steinitz}	Let $d \in \N$ and $\ep>0$, consider $m=d \cdot n(d,\ep)$ as in Definition \ref{definition-inscribed-volume}.  Then, the following property holds: If $X_1, X_2, \ldots, X_{m}$ are sets in $\R^d$ and $K \subset \bigcap_{i=1}^{m} \conv(X_i)$ is a convex set of volume $1$, we can choose $x_1 \in X_1, x_2 \in X_2, \ldots, x_{m} \in X_{m}$ so that
\[
\vol(\conv\{x_1, x_2, \ldots, x_{m}\})\ge 1-\ep.
\]
Moreover, if $m<n(d,\ep)$ the conclusion of the theorem may fail.
\end{theorem}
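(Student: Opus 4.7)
The plan is to combine efficient inscribed polytope approximation with Theorem~\ref{verycolorfulcaratheodory} (very colorful Carathéodory). By Definition~\ref{definition-inscribed-volume} applied to $K$, there exists an inscribed polytope $P \subset K$ with at most $N := n(d,\ep)$ vertices $p_1,\ldots,p_N$ (taken to be distinct, since repeated vertices may simply be discarded) and $\vol(P) \ge (1-\ep)\vol(K) = 1-\ep$. Since $m = dN$, I partition the color indices $\{1,\ldots,m\}$ into $N$ blocks $G_1,\ldots,G_N$ of size exactly $d$ and associate block $G_j$ with the vertex $p_j$. The aim is to choose one point $x_i \in X_i$ from each color class so that $P \subset \conv\{x_1,\ldots,x_m\}$.

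For the selection, note that $p_j \in K \subset \conv(X_i)$ for every $i$, so the hypothesis of Theorem~\ref{verycolorfulcaratheodory} is satisfied for any $d$ of the color classes with target point $p_j$. For $j=1$, I apply it to $\{X_i : i \in G_1\}$ with target $p_1$ and auxiliary point $q=p_2$, obtaining $x_i \in X_i$ ($i \in G_1$) such that $p_1 \in \conv(\{x_i : i \in G_1\} \cup \{p_2\})$. For each $j \ge 2$, I apply it to $\{X_i : i \in G_j\}$ with target $p_j$ and auxiliary point $q = p_1$, obtaining $x_i \in X_i$ ($i \in G_j$) with $p_j \in \conv(\{x_i : i \in G_j\} \cup \{p_1\})$. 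This selects exactly one point from every color class.

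The main obstacle is now to extract $p_1$ from the resulting circular relations, since it is expressed using $p_2$ which in turn uses $p_1$. Writing $p_1 = \alpha y_1 + (1-\alpha)p_2$ and $p_2 = \beta y_2 + (1-\beta)p_1$, with $y_j \in \conv\{x_i : i \in G_j\}$, the distinctness $p_1 \neq p_2$ forces $\alpha + \beta - \alpha\beta > 0$. Solving this $2 \times 2$ affine system yields
\[
p_1 = \frac{\alpha}{\alpha + \beta - \alpha\beta}\, y_1 + \frac{(1-\alpha)\beta}{\alpha + \beta - \alpha\beta}\, y_2,
\]
which is a genuine convex combination of points in $\conv\{x_1,\ldots,x_m\}$. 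Once $p_1 \in \conv\{x_1,\ldots,x_m\}$ is established, every remaining vertex $p_j$ ($j \ge 2$) lies on the segment from $p_1$ to some $y_j \in \conv\{x_i : i \in G_j\}$, so it also lies in $\conv\{x_1,\ldots,x_m\}$. Therefore $P \subset \conv\{x_1,\ldots,x_m\}$, giving $\vol(\conv\{x_1,\ldots,x_m\}) \ge \vol(P) \ge 1-\ep$.

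For the lower-bound claim, suppose $m < n(d,\ep)$. By the minimality in Definition~\ref{definition-inscribed-volume}, there is a convex body $K$ of volume $1$ for which no inscribed polytope on at most $m$ vertices attains volume $\ge 1-\ep$. Setting $X_1 = \cdots = X_m = K$ makes every colorful selection $x_1,\ldots,x_m$ into such an inscribed polytope, whose volume is then forced to be strictly less than $1-\ep$, contradicting the conclusion of the theorem.
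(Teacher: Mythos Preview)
Your proof is correct and follows essentially the same approach as the paper: approximate $K$ by an inscribed polytope $P$ with $n(d,\ep)$ vertices via Definition~\ref{definition-inscribed-volume}, then capture each vertex of $P$ using one block of $d$ color classes through the very colorful Carath\'eodory theorem. The only difference is the choice of auxiliary point $q$: the paper assumes without loss of generality that $0$ lies in the interior of $P$, uses $q=0$ for every block, and then recovers $0\in\conv\{x_1,\ldots,x_m\}$ by a one-line hyperplane separation argument, whereas you take other vertices of $P$ as auxiliaries and solve a small $2\times 2$ affine system to extract $p_1$; both devices accomplish the same task.
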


\begin{proof}[Proof of Theorem \ref{theorem-thrifty-steinitz}]
	Let $P \subset K$ be a polytope with $n=n(d,\ep)$ vertices such that $\vol(P) \ge (1- \ep) \vol(K)=(1-\ep)$.  We may assume without loss of generality that $0$ is in the interior of $P$.  Now label the vertices of $P$ as $y_1, y_2, \ldots, y_n$.  Using the very colorful Carath\'eodory theorem, for a fixed $j$ we can find $x_{(j-1)d+1}{\in}X_{(j-1)d+1}$, $\ldots ,$ $x_{jd} \in X_{jd}$ such that
	\[
	y_j \in \conv\{0, x_{(j-1)d+1}, \ldots, x_{jd} \}.
	\]
	In order to finish the proof, it suffices to show that $0 \in \conv\{x_1, \ldots, x_{nd} \}$.  If this is not the case, then there is a hyperplane separating $0$ from $\conv\{x_1, \ldots, x_{nd} \}$.  We may assume that the hyperplane contains $0$ and leaves $x_1, \ldots, x_{nd}$ in the same closed half-space.  Notice that then there would be a vertex of $y_j$ of $P$ in the other (open) half-space, contradicting the fact that $y_j{\in}\conv\{0,x_1,\ldots,x_{nd}\}$. 
	
We now prove the near-optimality of our bound. Let $K$ be a convex set of volume $1$ such that for every polytope $P \subset K$ of at most $n-1$ vertices we have $\vol(P) < 1-\ep$.  Then, taking $K = X_1 = X_2 = \cdots = X_{n-1}$ gives the desired counterexample, as any colorful choice of points has size $n-1$.
\end{proof}


 
Additionally, with the same ideas we get the following proposition, which improves the quantitative version of B\'ar\'any, Katchalski, and Pach when we want to optimize the radius of the balls contained in the set.  The number of sets we use is slightly improved by using the symmetries of the sphere.  The estimates are very similar to those of \cite{KMY92}, where the monochromatic version of the problem below was studied.  Indeed, the asymptotic growth in terms of $\ep$ is the same, and it is multiplied by a factor which is exponential in the dimension.  We did not chase the constants in the base of this exponential factor in the bounds for $n^{\operatorname{bms}}(d,\ep)$ to determine which result is best, but the proof with these methods is more concise and gives the colorful version.

\begin{theorem}\label{corollary-steinitz-for-balls}
Set $n=n^{\operatorname{bms}}(d,\ep)$ and let $X_1, X_2, \ldots, X_{(n-1)d+1}$ be sets in $\R^d$ such that $B_1 (0) \subset \bigcap_{i=1}^{(n-1)d+1} \conv X_i$. Then, we can choose $x_1 \in X_1, x_2 \in X_2, \ldots, x_{(n-1)d+1} \in X_{(n-1)d+1}$ so that
\[
B_{1/(1+\ep)}(0) \subset \conv\{x_1, x_2, \ldots, x_{(n-1)d+1}\}.
\]
\end{theorem}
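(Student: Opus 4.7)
The plan is to mirror the proof of Theorem~\ref{theorem-thrifty-steinitz}, replacing the volume-approximating polytope with one that approximates $B_1(0)$ in the Banach--Mazur sense, and to squeeze a saving of $d-1$ color classes by treating the final vertex separately via a Steinitz-type ray argument.

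First, I would apply Definition~\ref{banach-mazur-value-smooth} with $K=B_1(0)$: since the ball is rotation-invariant, the linear map $\lambda$ may be taken as a scalar dilation, so after rescaling we obtain a centrally symmetric polytope $P$ with at most $n$ vertices $y_1,\dots,y_n$ satisfying
\[
B_{1/(1+\ep)}(0)\subset P\subset B_1(0).
\]
In particular each $y_j$ lies in every $\conv X_i$. For $j=1,\dots,n-1$, I would invoke the very colorful Carath\'eodory theorem with $p=y_j$ and $q=0$ applied to the disjoint blocks $X_{(j-1)d+1},\dots,X_{jd}$, producing points $x_{(j-1)d+1}\in X_{(j-1)d+1},\dots,x_{jd}\in X_{jd}$ with $y_j\in\conv\{0,x_{(j-1)d+1},\dots,x_{jd}\}$. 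This exhausts exactly $(n-1)d$ of the available color classes.

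To handle the remaining vertex $y_n$ using only the single class $X_{(n-1)d+1}$, I would pick a unit vector $u$ in the interior of the outward normal cone of $P$ at $y_n$ with $\langle y_n,u\rangle<1$, and choose $z\in X_{(n-1)d+1}$ with $\langle z,u\rangle\ge 1>\langle y_n,u\rangle$; such a $z$ exists because $\conv X_{(n-1)d+1}\supset B_1(0)$. Setting $x_{(n-1)d+1}=z$, the key claim is that the ray from $y_n$ in direction $y_n-z$ enters $P$ and exits through a face not containing $y_n$, whose vertices therefore lie among $y_1,\dots,y_{n-1}$. That exit point then belongs to $\conv\{y_1,\dots,y_{n-1}\}\subset\conv\{0,x_1,\dots,x_{(n-1)d}\}$, placing $y_n$ on a segment between $z$ and a point already in our candidate convex hull. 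Finally, the separating hyperplane argument from the proof of Theorem~\ref{theorem-thrifty-steinitz} transfers verbatim to give $0\in\conv\{x_1,\dots,x_{(n-1)d+1}\}$, so that $B_{1/(1+\ep)}(0)\subset P\subset\conv\{x_1,\dots,x_{(n-1)d+1}\}$.

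The main difficulty is the geometric claim about where the ray $y_n+t(y_n-z)$ exits $P$: a single direction $u$ in the normal cone does not \emph{a priori} force $y_n-z$ into the tangent cone at $y_n$. I expect the resolution to use the central symmetry of $P$ essentially---either by choosing the polytope $P$ (equivalently, which vertex plays the role of $y_n$) adaptively to the set $X_{(n-1)d+1}$, or by an antipodal averaging step---and it is exactly this ``symmetry of the sphere'' mentioned before the theorem that allows the saving of $d-1$ color classes over the naive $nd$ bound.
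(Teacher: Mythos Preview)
Your overall architecture matches the paper's: approximate $B_1(0)$ by a polytope $P$ with $n$ vertices, capture $n-1$ of them via the very colorful Carath\'eodory theorem using $d$ colors each, and handle the last vertex with a single color. The difference is entirely in how that last vertex is handled, and your ray-exit argument is where the gap sits.

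As you yourself note, knowing only $\langle z,u\rangle>\langle y_n,u\rangle$ for one $u$ in the normal cone does not force $y_n-z$ into the tangent cone of $P$ at $y_n$; the ray $y_n+t(y_n-z)$ may leave $P$ immediately through a facet incident to $y_n$, and then your exit point is not a combination of $y_1,\dots,y_{n-1}$. Central symmetry of $P$ alone does not rescue this, and the antipodal averaging idea is vague.

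The paper avoids this difficulty entirely by exploiting the rotation invariance of $B_1(0)$ \emph{before} any colorful choices are made, not after. Pick any $x\in X_{(n-1)d+1}$ with $|x|\ge 1$ (such a point exists because $B_1(0)\subset\conv X_{(n-1)d+1}$), and rotate $P$ so that $y_n$ lies on the ray from $0$ through $x$. Since $|y_n|\le 1\le |x|$, one gets $y_n\in\conv\{0,x\}$ directly, with no ray-exit analysis needed. Only then does one apply the very colorful Carath\'eodory theorem to $y_1,\dots,y_{n-1}$; the rotated vertices still lie in $B_1(0)\subset\conv X_i$, so nothing is lost. This is precisely the ``adaptive choice of $P$'' you gesture at in your final paragraph, but executed in a much simpler way than the normal-cone argument you attempt.
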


\begin{proof}
We follow a proof similar to that of Theorem \ref{theorem-thrifty-steinitz}.
	Let $P \subset B_1(0)$ be a polytope with $n=n^{\operatorname{bms}}(d,\ep)$ vertices such that $B_1(0) \subset (1+\ep)P$.  Note that $0$ is in the interior of $P$.  Now label the vertices of $P$ as $y_1, y_2, \ldots, y_n$.  Using a rotation on $P$, we may assume that there is a point $x_{(n-1)d+1} \in X_{(n-1)d+1}$ such that $y_n \in \conv \{0, x_{(n-1)d+1}\}$. Using the very colorful Carath\'eodory theorem as before, for every $1\le j\le n-1$ we can find $x_{(j-1)d+1}{\in}X_{(j-1)d+1}$, $\ldots ,$ $x_{jd} \in X_{jd}$ such that
	\[
	y_j \in \conv\{0, x_{(j-1)d+1}, \ldots, x_{jd} \}.
	\]
	As in the proof of Theorem \ref{theorem-thrifty-steinitz}, we have that $0 \in \conv\{x_1, \ldots, x_{(n-1)d+1} \}$, yielding the desired result. 
\end{proof}

The reader may notice that an analogous proof of Theorem \ref{theorem-thrifty-steinitz} but using Definition \ref{banach-mazur-value-smooth} gives a version of the result above involving the Banach-Mazur distance to some fixed $K$ contained in each set of the form $\conv X_i$.  We finish the section by showing that the quantitative Steinitz theorem from B\'ar\'any, Katchalski and Pach mentioned in the introduction can be colored:

\begin{lemma}[Colorful quantitative Steinitz with containment of small balls]\label{theorem-colored-steinitz}
Let $r(d) \le (\pi/e^2)d^{-2d-2}$ and $X_1, X_2, \ldots, X_{2d}$ be sets in $\R^d$ such that $B_1 (0) \subset \conv(X_i)$ for all $i$.  Then, we can choose $x_1 \in X_1, x_2 \in X_2, \ldots, x_{2d} \in X_{2d}$ so that
\[
B_{r(d)}(0) \subset \conv\{x_1, x_2, \ldots, x_{2d}\}.
\]
\end{lemma}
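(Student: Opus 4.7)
The plan is to adapt the monochromatic proof of B\'ar\'any, Katchalski, and Pach by substituting the very colorful Carath\'eodory theorem (Theorem \ref{verycolorfulcaratheodory}) at each step where that proof selects an individual extremal point from $X$. This mirrors the strategy already used to prove Theorems \ref{theorem-thrifty-steinitz} and \ref{corollary-steinitz-for-balls}: whenever the monochromatic argument picks a single witness from $X$, we instead produce a colored witness by consuming $d$ color classes at a time (with one ``free'' point).

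First, I would inscribe a regular simplex $T$ in $B_1(0)$, with vertices $v_1, \ldots, v_{d+1}$ on the unit sphere and inradius $1/d$. For each vertex $v_j$ I would apply the very colorful Carath\'eodory theorem to a carefully chosen block of $d$ color classes among $X_1,\ldots,X_{2d}$, using the free point $q$ to encode the direction $v_j$: this yields colored points $x_i$ satisfying $0 \in \conv\{x_{i_1},\ldots,x_{i_d},v_j\}$, equivalently, $\conv\{x_{i_1},\ldots,x_{i_d}\}$ covers a positive multiple of $-v_j$. Since $d(d+1)>2d$, the $d+1$ applications cannot use disjoint color sets; the flexibility provided by the free point $q$ in each application is precisely what allows us to route through the same $2d$ color classes, each used exactly once. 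Assembling these local contributions gives $2d$ colored points $x_1,\ldots,x_{2d}$ whose convex hull contains a scaled copy of $T$ around $0$, and hence a ball $B_{r(d)}(0)$.

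The main obstacle is the quantitative analysis. The loss factor $(\pi/e^2)\,d^{-2d-2}$ relative to the monochromatic BKP bound $d^{-2d}$ should decompose as follows: a factor $1/d$ from the inradius of the inscribed regular simplex, the $d^{-2d}$ contribution that governs how short a colored extremal vector can be forced to be, and an additional $d^{-2}$ coming from the colorful reuse of color classes via the free points. The constant $\pi/e^2$ is expected to arise from a sharp trigonometric estimate on the minimum angular separation between the vertices of the regular simplex. The main bookkeeping task is to track the coefficients produced by each very colorful Carath\'eodory decomposition and to verify that, after optimizing the simplex orientation and the choices of free points, these factors combine to give exactly the bound stated.
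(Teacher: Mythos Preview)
Your proposed approach does not match the paper's, and as written it has a real gap. You correctly notice the counting obstruction: capturing each of the $d+1$ simplex vertices by a separate application of the very colorful Carath\'eodory theorem would consume $d(d+1)$ color classes, not $2d$. Your remedy---that the ``free point'' $q$ lets you reuse color classes so that each of the $2d$ colors is used exactly once---does not hold up. Each invocation of the very colorful Carath\'eodory theorem needs $d$ genuinely distinct color classes as inputs; the free point $q$ is an extra point, not a substitute for a color. With only $2d$ colors you can make at most two disjoint calls, so at most two simplex vertices get captured this way, and there is no mechanism that forces the outputs of overlapping calls to be consistent. The proposal therefore stops short of producing $2d$ points whose convex hull contains any fixed scaled simplex.

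The paper's argument is structurally different and avoids this counting issue entirely. It splits the $2d$ colors into two blocks of $d$. With the \emph{first} block $X_1,\ldots,X_d$ (after reducing each $X_i$ to at most $(d+1)^2$ points via Carath\'eodory), it forms all simplicial cones $\operatorname{cone}(x_1,\ldots,x_d)$ with $x_i\in X_i$; the very colorful Carath\'eodory theorem (with $q=0$) shows these cones cover $\R^d$. There are at most $(d+1)^{2d}$ such cones, so by pigeonhole one cone $C_1$ has spherical measure at least $\omega_{d-1}/(d+1)^{2d}$, hence angular inradius $\alpha>2\pi/(d(d+1)^{2d})$. Only then is the \emph{second} block $X_{d+1},\ldots,X_{2d}$ used, via a single very colorful Carath\'eodory call (again with $q=0$), to capture the point $-\tfrac{1}{d}a$ opposite the incenter $a$ of $C_1$. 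The convex hull of the resulting $2d$ points contains a cone with apex $-\tfrac1d a$ and angular half-width $\alpha$, whose inradius is $\tan\alpha/(2d)>\pi/((d+1)^{2d}d^2)$. The constant $\pi/e^2$ thus comes from the elementary bound $(d+1)^{2d}<e^2 d^{2d}$, not from any trigonometric estimate on simplex vertices; your decomposition of the loss factor is accordingly off.
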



\begin{proof}[Proof of Lemma \ref{theorem-colored-steinitz}] Our goal is to pick explicitly the $2d$ points $x_1,\dots,x_{2d}$. For this, let
 $P$ be a regular simplex of maximal volume contained in $B_1(0)$.  Note that $B_{1/d}(0) \subset P \subset B_1(0)$.  Since $P \subset \conv X_i$ for an arbitrary $i$ and $P$ has $d+1$ vertices, by repeatedly applying Carath\'eodory's theorem we can see that there is a subset of $X_i$ of size at most $(d+1)^2$ whose convex hull contains $P$.  Thus, without loss of generality we may assume $|X_i| \le (d+1)^2$ and $B_{1/d}(0) \subset \conv(X_i)$ for all $i$.
	
  Given a collection of $d$ points, $x_1 \in X_1$, $x_2 \in X_2$, $\ldots$, $x_d \in X_d$, consider the convex (simplicial) cone spanned by them.  Let $C_1, C_2, \ldots, C_n$ be all possible cones generated this way.  The number of cones, $n$, is clearly bounded by
  \[
  n \le (d+1)^{2d}.
  \]
  \noindent \textbf{Claim.} The cones $C_1, C_2, \ldots, C_n$ cover $\R^d$.
  
  In order to prove the claim, it suffices to show that for each vector $v$ of norm at most $\frac1d$, there is a cone $C_i$ that contains it.  However, since $B_{1/d}(0) \subset X_i$ for all $i$ (in particular for the first $d$), we can apply the very colorful Carath\'eodory theorem above with the point $p$ in the convex hull being $v$ and the extra point $q$ being $0$.
  
  If we denote by $\omega_{d-1}$ the surface area of the unit sphere $S^{d-1}$, there must be one of the cones 
  $C_i$ which covers a surface area of at least $\frac1n \omega_{d-1}$.  We can assume without loss of generality that it is the first cone $C_1$.
 
  Let $a \in C_1$ be a unit vector whose minimal angle $\alpha$ with the facets of $C_1$ is maximal (i.e. we take the incenter of $C_1 \cap S^{d-1}$, with distance measured in the sphere).  Now we show that since the surface area of $C_1 \cap S^{d-1}$ is large, its inradius must also be large.  The argument we present is different from \cite{baranykatchalskipach}, giving a slightly worse constant.  Our final radius is $(\pi/e^2)d^{-2d-2}$ as opposed to their $d^{-2d}$.
  
  For a facet $L_i$ of $C_1$, let $D_i$ be the set of points whose angle with $L_i$ is at most $\alpha$ and that lie on the same side of $L_i$ as $a$.  Note that $C_1$ has $d$ facets and 
  so $\cup_{i=1}^d D_i = C_1$.  The surface area of $S^{d-1} \cap D_i$ is clearly bounded by $\frac{\alpha}{2\pi}\omega_{d-1}$.  Thus
  \[
  \frac1n \omega_{d-1} \le \operatorname{Area}(S^{d-1} \cap C_1)  < \sum_{i=1}^d \operatorname{Area} (S^{d-1}\cap D_i)  \le \frac{d\alpha}{2\pi}\omega_{d-1},
  \]
which implies $\alpha > \frac{2\pi}{dn}$.  Now consider $a' =\frac{-1}d a$, the vector of norm $\frac1d$ in the direction opposite to $a$.  By applying the very colorful Carath\'eodory theorem as before, 
we can choose now $x_{d+1} \in X_{d+1}, x_{d+2} \in X_{d+2}, \ldots, x_{2d} \in X_{2d}$ such that
  \[
  a' \in \conv\{0, x_{d+1}, x_{d+2}, \ldots, x_{2d}\}.
  \]
  Now consider the set $K = \{x \in \frac1d S^{d-1} : \angle (x, a) \le \alpha\}$.  Let $x_1 \in X_1, \ldots, x_d \in X_d$ be the $d$ points that generate $C_1$.  Notice that the cone with apex $a'$ 
  and base $K$ is contained in $\conv\{x_1, x_2, \ldots, x_{2d}\}$.  Finding the radius $r$ of the largest ball around $0$ that is contained in this new cone is easily reduced to a $2$-dimensional 
  problem, giving
  \[
  r = \frac{\tan \alpha}{2d} > \frac{\alpha}{2d} > \frac{\pi}{nd^2} > (\pi/e^2)d^{-2d-2}.
  \]
  as we wanted.
\end{proof}

\section{Quantitative Helly theorems}\label{section-helly}

In this section we state precisely and give proofs for our Helly-type results.  As mentioned in the introduction, the first quantitative Helly-type theorem came from B\'ar\'any, Katchalski, and Pach's ground-breaking work \cite{baranykatchalskipach, MR750523}.  They measure the size of the intersection of a family of convex sets in two ways, using volume and diameter respectively.  Their result with diameter is essentially equivalent to that with volume, though the final constant obtained is slightly different.


Let us begin by showing that one can get a version of Helly's theorem with strong volumetric estimates.
\begin{theorem}[Quantitative Helly with volume]
\label{theorem-helly-for-volume}
Let $n=n^*(d, \ep)$ as in Definition \ref{definition-for-helly}.  Let $\ff$ be a finite family of convex sets such that for any subfamily $\ff'$ of at most $nd$ sets, $\vol\left( \bigcap \ff'\right) \ge 1 .$ Then, $\vol\left( \bigcap \ff \right) \ge (1+\ep)^{-1}.$
Moreover, if we only ask the condition for subfamilies of size $n-1$, the conclusion of the theorem may fail.
\end{theorem}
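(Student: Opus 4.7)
The strategy is to approximate $\bigcap \ff$ from outside by a polytope with few facets, and then use Helly's theorem to realize each facet via a small subfamily of $\ff$. By Definition \ref{definition-for-helly} there is a polytope $P = \bigcap_{i=1}^{n} H_i \supset \bigcap \ff$ with at most $n = n^*(d,\ep)$ facet half-spaces $H_1, \ldots, H_n$, satisfying $\vol(P) \le (1+\ep)\vol(\bigcap \ff)$. My goal is to find, for each $i$, a subfamily $\{F_{i,1}, \ldots, F_{i,d}\} \subset \ff$ of size $d$ with $F_{i,1} \cap \cdots \cap F_{i,d} \subset H_i$. The union $\ff' = \bigcup_{i=1}^{n} \{F_{i,1}, \ldots, F_{i,d}\}$ then has cardinality at most $nd$ and satisfies $\bigcap \ff' \subset P$, so the hypothesis combined with the approximation gives
\[
1 \le \vol(\bigcap \ff') \le \vol(P) \le (1+\ep)\vol(\bigcap \ff),
\]
i.e.\ $\vol(\bigcap \ff) \ge (1+\ep)^{-1}$.

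For the per-facet reduction, I would apply Helly's theorem to the finite convex family $\ff \cup \{O_i\}$, where $O_i$ denotes the open half-space opposite to $H_i$. Since $\bigcap \ff \subset H_i$, this family has empty intersection, so Helly's theorem produces a $(d+1)$-element subfamily with empty intersection. The hypothesis forces every $(d+1)$-element subfamily of $\ff$ alone to have volume at least $1$ and hence be nonempty, so $O_i$ must be one of the chosen sets; the remaining $d$ members $F_{i,j} \in \ff$ then satisfy $F_{i,1} \cap \cdots \cap F_{i,d} \cap O_i = \emptyset$, i.e.\ $F_{i,1} \cap \cdots \cap F_{i,d} \subset H_i$.

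For the near-optimality claim, I would exploit Definition \ref{definition-for-helly} in reverse: since $n-1 < n^*(d,\ep)$, there exists a convex body $K_0$ such that every polytope with at most $n-1$ facets containing $K_0$ has volume strictly greater than $(1+\ep)\vol(K_0)$. Rescale so $\vol(K_0) = (1+\ep)^{-1}$, and take $\ff$ to be a sufficiently dense finite collection of closed supporting half-spaces of $K_0$, chosen so that $\vol(\bigcap \ff)$ is arbitrarily close to $(1+\ep)^{-1}$. Every $(n-1)$-element subfamily of $\ff$ defines a polyhedron with at most $n-1$ facets containing $K_0$, whose volume (possibly infinite) strictly exceeds $(1+\ep)\vol(K_0) = 1$, so the weakened hypothesis still holds; yet $\vol(\bigcap \ff)$ can be made arbitrarily close to $(1+\ep)^{-1}$, showing that neither the subfamily size $nd$ nor the bound $(1+\ep)^{-1}$ can be strengthened in general.

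The main subtlety lies in the Helly step when $\bigcap \ff$ meets $\partial H_i$ or when $\vol(\bigcap \ff) = 0$. Both issues are handled by a standard perturbation: one shifts $H_i$ slightly inward to $H_i^\eta = \{x : \langle a_i, x \rangle \le b_i - \eta\}$, runs the argument, and passes to the limit $\eta \downarrow 0$. The degenerate case $\vol(\bigcap \ff) = 0$ leads directly to a contradiction with the hypothesis, since one can then find at most $2d \le nd$ sets whose intersection is pinned inside a hyperplane and therefore has zero volume.
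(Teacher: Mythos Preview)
Your main argument is correct and coincides with the alternative finish the paper itself offers: after enclosing $\bigcap\ff$ in a polytope $P$ with $n=n^*(d,\ep)$ facets, the paper notes that one may invoke the folklore Helly consequence ``$\bigcap\ff\subset H$ implies some $d$-element subfamily already has intersection contained in $H$'', which is exactly your per-facet step with $\ff\cup\{O_i\}$. (The paper's primary write-up carries out the same reduction in polar form, applying the very colorful Carath\'eodory theorem to the vertices of $P^*$; the two arguments are dual to one another.) Your handling of the boundary case $\bigcap\ff\cap\partial H_i\neq\emptyset$ via perturbation is in fact unnecessary: since $O_i$ is open and $\bigcap\ff\subset H_i$, the family $\ff\cup\{O_i\}$ has empty intersection regardless.

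Your near-optimality construction, however, does not prove that the conclusion can \emph{fail}. With $\vol(K_0)=(1+\ep)^{-1}$ and $\ff$ a finite collection of supporting half-spaces of $K_0$, every $(n-1)$-subfamily indeed has volume strictly exceeding $1$, but since $\bigcap\ff\supset K_0$ you also get $\vol(\bigcap\ff)\ge(1+\ep)^{-1}$: the conclusion still holds. What is missing is a rescaling. Because $\ff$ is finite, the minimum volume over all $(n-1)$-subfamilies is some $1+\delta$ with $\delta>0$; shrinking by the factor $(1+\delta)^{-1/d}$ then makes every $(n-1)$-subfamily have volume at least $1$ while forcing $\vol(\bigcap\ff)<(1+\ep)^{-1}$. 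The paper achieves this more directly by taking $K$ to be a \emph{polytope} of volume $1$ that witnesses $n^*(d,\ep)>n-1$ and letting $\ff$ be its facet half-spaces; then every $(n-1)$-subfamily has volume at least $1+\ep+\delta$ for some $\delta>0$, while $\vol(\bigcap\ff)=1$, and the same rescaling finishes.
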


\begin{proof}[Proof of Theorem \ref{theorem-helly-for-volume}]
We may assume that $\bigcap \ff$ has non-empty interior. This was the first step in the original proof given in \cite{baranykatchalskipach}.  We may either use the same method or notice that if $n \ge 2$ we can actually use the``quantitative volume theorem'' of \cite[p.~109]{baranykatchalskipach} to obtain this. If $\bigcap \ff$ is not bounded, then it has infinite volume.  Moreover, we may assume that the sets in $\ff$ are closed half-spaces, or we could take the set of half-spaces containing $\bigcap \ff$ instead of $\ff$. Thus, it suffices to prove the following lemma.

\begin{lemma}
Let $\ff$ be a family of half-spaces such that $\bigcap \ff$ has volume $1$ and contains the origin in its interior. For $n=n^*(d,\ep)$, there is a subfamily $\ff' \subset \ff$ of at most $nd$ elements such that $\vol(\bigcap \ff') \le 1+\ep$.
\end{lemma}

To prove the lemma, consider a polytope $P$ of $n$ facets containing $K=\bigcap \ff$ such that $\vol(P) \le 1+\ep$. Such a polytope exists by the definition of $n^*(d,\ep)$. After taking polars, we have $P^* \subset K^*$, and $P^*$ is a convex polytope with $n$ vertices.  Let $\ff^*$ be the family of polars of the elements in $\ff$.  Note that $\conv(\ff^*) = K^*$.  For each vertex $v$ of $P^*$, using the very colorful Carath\'eodory theorem we can find a set $A_v$ of $d$ points in $\ff^*$ such that $v \in \conv(A_v \cup \{0\})$.  Now consider $\ff' = (\cup_v A_v)^* \subset \ff$.  Notice that $\ff'$ is a subfamily of at most $nd$ elements.  Let us prove that $P^* \subset \conv[(\ff')^*]$.  By the definition of $\ff'$, it suffices to show that $0 \in \conv[(\ff')^*] $.  However, if that is not the case, there would be a closed half-space through $0$ containing $(\ff')^*$, and thus every vertex of $P^*$.  This would contradict the fact that $0$ is in the interior of $\conv(P)$.

Thus, we can find a subset $\ff' \subset \ff$ of at most $nd$ elements such that $P^* \subset \conv [(\ff')^*]$.  Then
	\[
	K \subset \bigcap \ff' \subset P,
	\]
	giving the desired result.
	
In order to prove optimality, let $K$ be a convex polytope of volume $1$ such that any polytope $P \supset K$ with at most $n-1$ facets has volume greater than $1+\ep$; this exists by the definition of $n^*(d,\ep)$.  Let $\mathcal{F}$ be the set of closed half-spaces that contain $K$ and define a facet of $K$.  Clearly, there is a $\delta >0$ such that the intersection of every $n-1$ elements of $\ff$ has volume at least $1+\ep+\delta$, but the intersection $\bigcap \ff$ is of volume $1$.
\end{proof}

Once we have constructed the polytope $P$, we can also finish the proof with the following folklore lemma that follows from Helly's theorem.

\begin{lemma}
Let $\mathcal{F}$ be a finite family of convex sets and $H$ a closed half-space such that $\bigcap \ff \subset H$ and $\bigcap \ff \neq \emptyset$.  Then, there is a subfamily $\ff' \subset \ff$ of at most $d$ sets such that $\bigcap \ff' \subset H$. 
\end{lemma}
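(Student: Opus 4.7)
The plan is to apply Helly's theorem to the family $\ff$ augmented by a slightly shifted version of the complementary half-space to $H$, and then use the finiteness of $\ff$ to remove the shift via pigeonhole. Write $H = \{x \in \R^d : \langle a, x\rangle \le b\}$ with $a \neq 0$ (the degenerate case $a=0$ is trivial from the hypothesis $\bigcap \ff \neq \emptyset$), and for each $\delta > 0$ introduce the closed convex set $H_\delta = \{x \in \R^d : \langle a, x\rangle \ge b+\delta\}$. Since $\bigcap \ff \subset H$ we have $\bigcap \ff \cap H_\delta = \emptyset$, so the finite family $\ff \cup \{H_\delta\}$ has empty total intersection. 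Helly's theorem then produces a subfamily of size at most $d+1$ with empty intersection, and because $\bigcap \ff \neq \emptyset$ this subfamily must contain $H_\delta$. Hence there exists $\ff_\delta \subset \ff$ of size at most $d$ with $\bigcap \ff_\delta \cap H_\delta = \emptyset$, equivalently $\bigcap \ff_\delta \subset \{x : \langle a, x\rangle < b+\delta\}$.

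Next I would exploit the finiteness of $\ff$: it has only finitely many subfamilies of size at most $d$. Choosing a sequence $\delta_n \to 0^+$ and applying pigeonhole to the associated subfamilies $\ff_{\delta_n}$, some single $\ff' \subset \ff$ of size at most $d$ satisfies $\bigcap \ff' \subset \{x : \langle a, x\rangle < b+\delta_n\}$ for infinitely many $n$. Intersecting these open half-spaces gives $\bigcap \ff' \subset \{x : \langle a, x\rangle \le b\} = H$, which is the desired conclusion.

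The only delicate point, and the reason a direct application of Helly to $\ff$ together with the closed half-space $H_0 = \{x : \langle a, x\rangle \ge b\}$ does not suffice, is that $\bigcap \ff \cap H_0$ can be nonempty even though $\bigcap \ff \subset H$, since $\bigcap \ff$ may touch the bounding hyperplane of $H$. The $\delta$-shift combined with pigeonhole over the finitely many $d$-element subfamilies of $\ff$ handles exactly this boundary degeneracy, and beyond this I do not anticipate further obstacles.
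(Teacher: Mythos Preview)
Your proof is correct, and the paper itself does not supply an explicit argument for this lemma --- it simply calls it folklore and says it ``follows from Helly's theorem.'' So there is nothing substantive to compare against in the paper.

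That said, the $\delta$-shift and pigeonhole step are unnecessary. Helly's theorem for a \emph{finite} family of convex sets requires no closedness hypothesis (its proof via Radon's lemma is purely affine). Hence you may apply Helly directly to $\ff \cup \{U\}$, where $U = \R^d \setminus H = \{x : \langle a,x\rangle > b\}$ is the open complementary half-space. The total intersection is empty because $\bigcap \ff \subset H$; Helly then gives a subfamily of size at most $d+1$ with empty intersection, and exactly as you argue, this subfamily must contain $U$ since every subfamily of $\ff$ alone contains $\bigcap \ff \neq \emptyset$. Removing $U$ yields the desired $\ff' \subset \ff$ of size at most $d$ with $\bigcap \ff' \cap U = \emptyset$, i.e., $\bigcap \ff' \subset H$. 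Your observation about why the \emph{closed} half-space $H_0$ fails is right, but the remedy is simply to use the open one rather than to pass to a limit.
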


Similar statements hold with other continuous functions, such as the diameter.  Given two convex sets $C, D$, we denote their Hausdorff distance $\delta_{H}(C,D)$; then, we have $|\operatorname{diam}(C)-\operatorname{diam}(D)| \le 2\delta_{H}(C,D).$  It is a classic problem to approximate a convex set by a polytope with few facets that contains it and is close in Hausdorff distance \cite{bronstein2008approximation, dudley1974metric, bronsteinivanov}. In particular, for any convex set $K$, there is a polytope $P \supset K$ with $O(\ep^{-(d-1)/2})$ facets which is at Hausdorff distance at most $\ep$ from $K$ (the $O$ notation hides constants depending on $K$).  Thus it makes sense to define the following.

\begin{proposition}
\label{definition-diameter}
Let $d$ be a positive integer and $\ep>0$.  Then, there exists an integer $n$ such that for any convex set $K \subset \R^d$ with positive volume, there is a polytope $P \supset K$ of at most $n$ facets such that
\[
\operatorname{diam}(P) \le (1+\ep) \operatorname{diam}(K).
\]

We define $n^{\operatorname{diam}}(d,\ep)$ to be the smallest such value $n$.
\end{proposition}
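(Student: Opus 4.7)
The plan is to combine the cited Hausdorff polytope-approximation results (Bronstein--Ivanov and its refinements) with the elementary inequality $|\operatorname{diam}(C)-\operatorname{diam}(D)| \le 2\delta_{H}(C,D)$ stated immediately before the proposition. Since both the target inequality $\operatorname{diam}(P) \le (1+\ep)\operatorname{diam}(K)$ and the number of facets of $P$ are scale-invariant, I would first rescale so that $\operatorname{diam}(K) = 1$.

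Next, I would invoke the Hausdorff polytope approximation theorem in normalized form: for every convex body $K \subset \R^d$ with $\operatorname{diam}(K) \le 1$ and every $\delta > 0$ there exists a polytope $P \supset K$ with at most $c(d)\,\delta^{-(d-1)/2}$ facets such that $\delta_{H}(P,K) \le \delta$, where $c(d)$ depends only on the dimension. Choosing $\delta = \ep/2$ and applying the diameter--Hausdorff inequality yields $\operatorname{diam}(P) \le \operatorname{diam}(K) + 2\delta_{H}(P,K) \le 1 + \ep = (1+\ep)\operatorname{diam}(K)$. This simultaneously shows that some valid $n$ exists and gives the quantitative bound $n^{\operatorname{diam}}(d,\ep) \le c(d)\,(\ep/2)^{-(d-1)/2} = O(\ep^{-(d-1)/2})$.

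The one subtle point is the remark in the excerpt that the $O$-notation in the approximation result ``hides constants depending on $K$.'' This apparent $K$-dependence is in fact pure scale dependence: the classical Bronstein--Ivanov bound takes the form $c(d)\,(\operatorname{diam}(K)/\delta)^{(d-1)/2}$ facets for Hausdorff error at most $\delta$, so once $K$ has been normalized to unit diameter the constant depends only on $d$. The scaling step at the beginning is therefore what promotes the qualitative approximation statement to the uniform integer $n^{\operatorname{diam}}(d,\ep)$ demanded by the proposition. I do not expect any other genuine obstacle: the argument is essentially a direct translation of the cited polytope-approximation estimate into a diameter estimate via $|\operatorname{diam}(C)-\operatorname{diam}(D)| \le 2\delta_{H}(C,D)$.
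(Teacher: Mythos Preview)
Your argument is correct, but it differs from the paper's in a meaningful way. The paper treats the parenthetical remark ``the $O$ notation hides constants depending on $K$'' at face value: it only uses that $n^{\operatorname{diam}}(d,\ep,K)$ is finite for each fixed $K$, and then obtains uniformity via a compactness argument in the Hausdorff metric on the family of closed convex sets $K \subset B_1(0)$ with diameter $2$. By contrast, you cut straight to the point by observing that the Dudley/Bronstein--Ivanov facet bound is of the form $c(d)(\operatorname{diam}(K)/\delta)^{(d-1)/2}$, so the $K$-dependence is purely through $\operatorname{diam}(K)$ and disappears after rescaling. Your route is shorter and yields the explicit estimate $n^{\operatorname{diam}}(d,\ep) = O(\ep^{-(d-1)/2})$, which the paper's compactness argument does not; the paper's route is softer and would survive even if one only had a per-body approximation result without a clean uniform constant.
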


\begin{proof}
From the discussion above, if we fix $K$, we know that $n^{\operatorname{diam}}(d,\ep,K)$ exists and is $O(\ep^{-(d-1)/2})$. Fix $\ep$ and $d$.

In order to get a universal bound for $n^{\operatorname{diam}}(d,\ep)$, note that it is sufficient to show the existence for the family $\mathcal{C}$ of closed convex sets $K \subset B_1 (0)$ with diameter $2$.  If there was no upper bound for $n^{\operatorname{diam}}(d,\ep)$, we would be able to find a sequence of convex sets such that $n^{\operatorname{diam}}(d,\ep,K_i) \to \infty$.  Since $\mathcal{C}$ is compact under the Hausdorff topology, there is a convergent subsequence.  If $K_i \to \tilde{K}$, one can see that polytopes that approximate $\tilde{K}$ very well would approximate $K_i$ as well if $i$ is large enough (a small perturbation is needed to fix containment, with arbitrarily small effect on the diameter).   This leads to the fact that $\limsup_{i \to \infty} n^{\operatorname{diam}}(d,\ep,K_i)$ is bounded by $n^{\operatorname{diam}}(d,\ep, \tilde{K})$, a contradiction.
\end{proof}

The fact that we have to work with convex sets up to homothetic copies is the reason why we can get bounds which approximate diameter with a relative error as opposed to an absolute error.  With $n^{\operatorname{diam}}(d,\ep)$ defined, we can state our quantitative Helly theorem for diameter.  The proof closely follows that of Theorem \ref{theorem-helly-for-volume}.

\begin{theorem}[Quantitative Helly with diameter]
\label{theorem-helly-for-diameter}
Let $n=n^{\operatorname{diam}}(d, \ep)$.  Let $\ff$ be a finite family of convex sets such that for any subfamily $\ff'$ of at most $nd$ sets, $\operatorname{diam}\left( \bigcap \ff'\right) \ge 1 .$ Then, $\operatorname{diam}\left( \bigcap \ff \right) \ge (1+\ep)^{-1}.$
Moreover, if we only ask the condition for subfamilies of size $n-1$, the conclusion of the theorem may fail.
\end{theorem}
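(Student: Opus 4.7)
The plan is to follow the proof of Theorem \ref{theorem-helly-for-volume} line by line, substituting diameter for volume. I would first reduce to the case where $K := \bigcap \ff$ is bounded (otherwise the conclusion is automatic) and has $0$ in its interior; the degenerate case where $K$ has empty interior but positive diameter will need a separate perturbation or affine-hull argument, discussed below. Next, I would replace $\ff$ by the family $\mathcal{H}$ of all closed half-spaces containing some member of $\ff$. Then $\bigcap \mathcal{H} = \bigcap \ff = K$, and the hypothesis transfers to $\mathcal{H}$: any $k$-subfamily of $\mathcal{H}$ with $k \le nd$ has intersection containing the intersection of at most $k$ corresponding members of $\ff$, which has diameter $\ge 1$ by assumption. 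It therefore suffices to prove the diameter analog of the half-space lemma inside the proof of Theorem \ref{theorem-helly-for-volume}.

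The core step is to show that for a family $\ff$ of half-spaces with $K = \bigcap \ff$ containing $0$ in its interior, there is a subfamily $\ff' \subset \ff$ of at most $nd$ half-spaces with $K \subset \bigcap \ff' \subset P$, where $P$ is a polytope of $n = n^{\operatorname{diam}}(d,\ep)$ facets provided by Proposition \ref{definition-diameter} satisfying $\operatorname{diam}(P) \le (1+\ep)\operatorname{diam}(K)$. The construction of $\ff'$ mirrors the volume case exactly: passing to polars gives $P^* \subset K^* = \conv(\ff^*)$ with $P^*$ a polytope having $n$ vertices; for each vertex $v$ of $P^*$, applying the very colorful Carath\'eodory theorem with all color classes equal to $\ff^*$ and $q = 0$ produces a set $A_v$ of $d$ points in $\ff^*$ with $v \in \conv(A_v \cup \{0\})$; the union $A = \bigcup_v A_v$ has at most $nd$ elements, and setting $\ff' := A^*$ the separation argument from the volume proof (using $0 \in \inter P$) gives $P^* \subset \conv[(\ff')^*]$. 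Taking polars once more yields $\bigcap \ff' \subset P$, while $K \subset \bigcap \ff'$ is automatic from $\ff' \subset \ff$.

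Combining gives $1 \le \operatorname{diam}(\bigcap \ff') \le \operatorname{diam}(P) \le (1+\ep)\operatorname{diam}(K)$, whence $\operatorname{diam}(K) \ge (1+\ep)^{-1}$. For the optimality statement, by the extremality in Proposition \ref{definition-diameter} one may fix a convex polytope $K$ of positive diameter such that every polytope with at most $n-1$ facets containing $K$ has diameter strictly greater than $(1+\ep)\operatorname{diam}(K)$; a compactness argument on the space of such approximating polytopes produces a uniform gap $\delta > 0$, so every $(n-1)$-subfamily of the facet-defining half-spaces $\ff$ of $K$ has intersection of diameter at least $(1+\ep+\delta)\operatorname{diam}(K)$, while $\bigcap \ff = K$. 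Rescaling the whole configuration so that the minimum over $(n-1)$-subfamilies equals $1$ then forces $\operatorname{diam}(\bigcap \ff) \le (1+\ep+\delta)^{-1} < (1+\ep)^{-1}$, producing the required counterexample.

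The main obstacle I expect is handling the degenerate case where $K$ has empty interior but positive diameter, since Proposition \ref{definition-diameter} is stated only for sets of positive volume. The cleanest resolution is to prove the statement inside the affine hull of $K$, where Proposition \ref{definition-diameter} applies in a lower dimension and at most a few additional trapping half-spaces from $\ff$ are needed to recover the full $d$-dimensional conclusion; alternatively, one can slightly dilate each member of $\ff$ to force $K$ to have nonempty interior and take a limit as the dilation shrinks.
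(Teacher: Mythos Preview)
Your proposal is correct and follows exactly the approach the paper intends: the paper does not give a separate proof of Theorem \ref{theorem-helly-for-diameter} but simply states that ``the proof closely follows that of Theorem \ref{theorem-helly-for-volume},'' which is precisely what you do. Your write-up is in fact more detailed than the paper's, including the explicit handling of the half-space reduction, the polar argument, the optimality construction, and the degenerate empty-interior case (which the paper does not address at all for diameter).
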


We can also prove a colorful version of Theorem \ref{theorem-helly-for-volume}.

\begin{theorem}[Colorful quantitative Helly with volume]
\label{thm:hcolorcont}
For any positive integer $d$ and $\ep >0$, there exists $n=n^h(d, \ep)$ such that the following holds.  Let $\ff_1, \ldots, \ff_n$ be $n$ finite families of convex sets such that for every choice $K_1 \in \ff_1, \ldots, K_n \in \ff_n$ we have $ \vol{\left( \bigcap_{i=1}^n K_i\right)} \ge 1.$
Then, there is an index $i$ such that
\[
\vol\left( \bigcap \ff_i \right) \ge \frac{1}{1+\ep}.
\]
\end{theorem}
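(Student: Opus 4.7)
The plan is to argue by contradiction, adapting the polytope-approximation strategy of Theorem~\ref{theorem-helly-for-volume} and inserting a colorful selection step based on the very colorful Carath\'eodory theorem (Theorem~\ref{verycolorfulcaratheodory}), in the same spirit as the proof of Theorem~\ref{theorem-thrifty-steinitz}. Suppose for contradiction that $\vol(\bigcap \ff_i) < 1/(1+\ep)$ for every $i$, so that $L := \bigcap_{i=1}^n \bigcap \ff_i$ satisfies $\vol(L) \le \vol(\bigcap \ff_1) < 1/(1+\ep)$. As in Theorem~\ref{theorem-helly-for-volume}, I would first assume that each $\ff_i$ consists of closed half-spaces containing $K_i := \bigcap \ff_i$, paying a factor of $d$ in subfamily sizes via the folklore lemma stated after that theorem.

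By Definition~\ref{definition-for-helly}, there exists an outer polytope $P \supset L$ with at most $n^*(d,\ep)$ facets and $\vol(P) \le (1+\ep)\vol(L) < 1$. After a translation putting $0 \in \inter L$, the polar $P^*$ is a polytope with at most $n^*(d,\ep)$ vertices, contained in $L^* = \conv\bigl(\{0\} \cup \bigcup_i \ff_i^*\bigr)$, where $\ff_i^*$ denotes the polar image of $\ff_i$. The core colorful step is to apply the very colorful Carath\'eodory theorem at each vertex $v$ of $P^*$: I would seek $d$ distinct color classes $\ff_{j_1},\dots,\ff_{j_d}$ with $v \in \conv \ff_{j_k}^*$ for every $k$, producing a colorful $d$-tuple whose convex hull with $0$ contains $v$. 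Dualizing back, each vertex of $P^*$ yields a colorful sub-selection of $d$ half-spaces whose intersection lies inside the corresponding facet of $P$. Distributing the $\le n^*(d,\ep)$ vertices into disjoint blocks of $d$ color classes and completing with arbitrary half-spaces from the remaining families produces a full colorful selection with intersection contained in $P$, and hence of volume strictly less than $1$, contradicting the hypothesis.

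The principal obstacle is verifying the very colorful Carath\'eodory hypothesis $v \in \conv \ff_{j_k}^*$ at each vertex, which amounts to requiring that the half-space polar to $v$ contain each $K_{j_k}$. Since $P$ only contains $L = \bigcap_i K_i$, and not each $K_{j_k}$ separately, this is not automatic. I would bridge the gap by an additional application of the folklore half-space lemma, which replaces each facet-half-space of $P$ (which contains $L$) by a $d$-tuple of actual sets in the families $\ff_i$ whose intersection is contained in that half-space. This step absorbs the factor of $d$ but also forces $n^h(d,\ep)$ to be substantially larger than $n^*(d,\ep) \cdot d$, consistent with the remark that the estimates for the colorful and monochromatic settings differ greatly.
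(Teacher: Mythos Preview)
Your polar/Carath\'eodory strategy has a genuine gap at exactly the point you flag, and the fix you sketch in the last paragraph does not close it. The folklore lemma, applied with the family $\bigcup_i \ff_i$ to a facet-half-space $G$ of $P$, produces $d$ half-spaces from $\bigcup_i \ff_i$ whose intersection lies in $G$; but nothing prevents all $d$ of them from belonging to the \emph{same} color class $\ff_i$. Carrying this out for every facet yields at most $d\cdot n^*(d,\ep)$ half-spaces whose total intersection is contained in $P$, yet several of them may live in a single $\ff_i$. A colorful selection permits only one half-space per $\ff_i$; once you discard duplicates, containment in $P$ is no longer guaranteed and the volume bound evaporates. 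Taking $n$ large does not help: the obstruction is local to each facet and is not cured by an abundance of spare colors elsewhere. On the polar side the difficulty is the same: the hypothesis $v\in\conv(\ff_j^*)$ of the very colorful Carath\'eodory theorem is equivalent to $K_j\subset G$, whereas you only know $L=\bigcap_j K_j\subset G$, and there is no reason any individual $K_j$ should lie in $G$. What you would need is a \emph{colorful} version of the folklore half-space lemma, and that is not available.

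The paper's argument avoids this obstacle by a different mechanism. It passes to the contrapositive (each $\ff_i$ is a family of half-spaces with $\vol(\bigcap\ff_i)\le 1$), uses Theorem~\ref{theorem-helly-for-volume} to shrink each $\ff_i$ to a subfamily $\ff_i'$ of bounded size, and invokes the classical colorful Helly theorem only to guarantee that \emph{some} colorful intersection has finite volume. It then takes a colorful selection of \emph{minimum} volume $V$, applies Theorem~\ref{theorem-helly-for-volume} a second time to isolate a small index set $S$ already carrying almost all of the constraint, and for any color $j\notin S$ runs a pigeonhole/union-bound argument: if no $H\in\ff_j'$ cut the current polytope substantially, one would get $\vol(\bigcap\ff_j')>1+\ep'$, a contradiction. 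Minimality of $V$ then converts this into the required upper bound on $V$. Polytope approximation enters only inside the monochromatic theorem as a black box, and the colorful input is the qualitative colorful Helly theorem together with a minimality argument, rather than a direct colorful Carath\'eodory construction in the polar.
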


To prove Theorem \ref{thm:hcolorcont}, we will prove the following equivalent formulation.

\begin{theorem}
For any positive integer $d$ and $\ep >0$, there is an $n=n^h(d, \ep)$ such that the following holds.  Let $\ff_1, \ldots, \ff_n$ be $n$ families of closed half-spaces such that for each $i$ we have $ \vol\left( \bigcap_{H\in \ff_i} H\right) \le 1.$ 
Then, there is a choice $H_1 \in \ff_1, \ldots , H_n \in \ff_n$ such that
\[ \vol{\left( \bigcap_{i=1}^n H_i\right)} \le 1+\ep. \]
\end{theorem}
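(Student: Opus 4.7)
The plan is to prove the equivalent formulation via polar duality, reducing to a colorful covering of the vertices of an approximating polytope. After the usual reductions (finite families by continuity; the case $K := \bigcap_i K_i = \emptyset$, with $K_i := \bigcap \ff_i$, handled by applying classical Helly to $\bigcup_i \ff_i$ since $d+1$ half-spaces with empty intersection lie in at most $d+1$ distinct families, and completing the remaining families arbitrarily), I assume $K \ne \emptyset$ with $\vol(K) > 0$ and, after translating, $0 \in \inter K$. By Definition \ref{definition-for-helly}, fix a polytope $P \supset K$ with $m := n^*(d, \ep)$ facets and $\vol(P) \le (1+\ep)\vol(K) \le 1+\ep$; it suffices to produce a colorful $\{H_i \in \ff_i\}$ with $\bigcap_i H_i \subset P$. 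Passing to polars, this amounts to choosing $y_i \in \ff_i^*$, one per family, so that every vertex $v_k$ of $P^*$ lies in $\conv\{y_1, \ldots, y_n\}$; here $P^*$ has $m$ vertices and $P^* \subset K^* = \overline{\conv}(\bigcup_i \ff_i^*)$.

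The key ingredient is a colorful version of the folklore Helly lemma used at the end of the proof of Theorem \ref{theorem-helly-for-volume}: given $d+1$ families $\mathcal{G}_1, \ldots, \mathcal{G}_{d+1}$ of convex sets and a closed half-space $H$ with $\bigcap \mathcal{G}_i \subset H$ for each $i$, there exists a colorful transversal $C_i \in \mathcal{G}_i$ with $\bigcap_i C_i \subset H$. This follows from B\'ar\'any's colorful Helly theorem applied to the families $\{C \cap H^- : C \in \mathcal{G}_i\}$ (with $H^-$ the closed opposite half-space, slightly perturbed outward to avoid the boundary) by taking the contrapositive. Setting $n^h(d,\ep)$ to a dimension-dependent multiple of $m$, I would partition the families into $m$ groups $G_1, \ldots, G_m$, one per vertex $v_k$ of $P^*$, and apply the colorful folklore lemma inside each group with $H$ equal to the facet half-space $H_{v_k}$ of $P$ dual to $v_k$. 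Concatenating the colorful representatives across all groups (and completing arbitrarily for any family not in a group) gives a colorful choice with $\bigcap_{i=1}^n H_i \subset \bigcap_k H_{v_k} = P$, as required.

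The main obstacle is the hypothesis of the colorful folklore Helly inside each $G_k$: it requires $K_i \subset H_{v_k}$ for every family $\ff_i$ in $G_k$, whereas only $K \subset H_{v_k}$ is given a priori. To overcome this I would select the partition adaptively, with $G_k$ drawn from $I_k := \{i : K_i \subset H_{v_k}\}$ and made disjoint via a Hall-type matching argument. Verifying the matching condition requires showing that each $I_k$ is large enough, using the bound $\vol(K_i) \le 1$ (which, via Blaschke--Santal\'o, forces the polars $K_i^*$ to be fat and thus frequently contain vertices of $P^*$) together with the close approximation $\vol(P) \le (1+\ep)\vol(K)$ to control the number of facets of $P$ that any given $K_i$ can violate. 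Pinning down this bookkeeping is the crux of the argument, and the final value of $n^h(d,\ep)$ emerges as a constant (depending on $d$) multiple of $n^*(d,\ep)$.
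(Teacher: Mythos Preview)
Your approach is quite different from the paper's, and the step you yourself flag as ``the crux of the argument'' is a genuine gap that the tools you invoke do not close.  First, the Blaschke--Santal\'o inequality is an \emph{upper} bound on $\vol(K_i)\vol(K_i^{*})$, so it cannot force $K_i^{*}$ to be fat; even the reverse (Bourgain--Milman) inequality requires polarizing at the Santal\'o point of $K_i$, whereas you polarize at an interior point of $K=\bigcap_i K_i$, which may lie arbitrarily close to $\partial K_i$.  Second, there is no control on how many facets of $P$ a single $K_i$ can violate: nothing prevents $\vol(K)$ from being tiny while $\vol(K_i)$ is close to~$1$, in which case $K_i$ can protrude through \emph{every} facet of the approximating polytope $P$ (whose volume is only $(1+\ep)\vol(K)$).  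In that regime the sets $I_k=\{i:K_i\subset H_{v_k}\}$ can be very small or empty, and neither a per-vertex bound on $|I_k|$ nor the Hall condition follows from the ingredients you list.  Your reduction for the case $K=\emptyset$ is also incomplete: classical Helly applied to $\bigcup_i\ff_i$ yields $d+1$ half-spaces with empty intersection, but several of them may lie in the \emph{same} $\ff_i$, so you cannot directly extract a colorful selection with empty intersection from them.

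The paper proceeds along entirely different lines and avoids any per-facet matching.  It first applies the monochromatic Theorem~\ref{theorem-helly-for-volume} inside each color to replace $\ff_i$ by a subfamily $\ff'_i$ of size at most $d\cdot n^*(d,\ep')$ with $\vol\bigl(\bigcap\ff'_i\bigr)\le 1+\ep'$; this bounded size is what makes a later averaging step possible.  A duality/colorful-Helly argument then shows that some colorful selection has finite volume, so one may take the colorful selection $H_1,\ldots,H_n$ of \emph{minimum} volume $V$.  Applying Theorem~\ref{theorem-helly-for-volume} again to this minimizer extracts a subset $S$ of size at most $d\cdot n^*(d,\ep'')$ with $\vol\bigl(\bigcap_{i\in S}H_i\bigr)\le(1+\ep'')V$; for any color $j\notin S$, a union-bound over the at most $d\cdot n^*(d,\ep')$ half-spaces in $\ff'_j$ shows that one of them cuts off at least the average share of the excess volume.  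Comparing the resulting colorful selection with the minimality of $V$ yields $V\le 1+\ep$ for a suitable choice of $\ep',\ep''$.
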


\begin{proof}
Let $\ep',\ep''$ be values depending on $\ep$, to be chosen later, and suppose that $n=\Omega(d\cdot n^*(d,\ep''))$.

Applying Theorem \ref{theorem-helly-for-volume} in the contrapositive, we replace each $\ff_i$ by a subset $\ff'_i\subseteq \ff_i$ such that we have $|\ff'_i|\le d\cdot n^*(d,\ep')$ and
\[ \vol\left( \bigcap_{H\in \ff'_i} H\right) \le 1+\ep'.\] We will assume that none of the hyperplanes in the $\ff'_i$ are parallel, else we could perturb them slightly.

\textbf{Claim.} There exists some choice of $H_1\in \ff'_1,\ldots,H_n\in \ff'_n$ such that $\bigcap_{i=1}^n H_i$ has finite volume.

Observe that translating nonparallel half-spaces in different directions does not affect whether their intersection has finite volume, though it may affect the value of that volume.  Given $H\in \ff'_i$, we may consider the hyperplane that defines this half-space; by invariance under translation, we may suppose that all these hyperplanes are tangent to the unit sphere centered at the origin.  Now, applying hyperplane-point duality, each family $\ff'_i$ is transformed to a family of points for which the convex hull contains the origin.  Applying standard colorful Helly's theorem, there must exist a rainbow set of points for which the convex hull contains the origin.  This corresponds to our desired choice $H_1\in \ff'_1,\ldots,H_n\in \ff'_n$, proving the claim.

Now, suppose that $H_1\in \ff'_1,\ldots,H_n\in \ff'_n$ are chosen such that $V=\vol\left( \bigcap_{i=1}^n H_i\right)$ attains the minimum value. Applying Theorem \ref{theorem-helly-for-volume}, again in the contrapositive, there exists some subset $S\subseteq \{1,\ldots,n\}$ such that $|S|\le d\cdot n^*(d,\ep'')$ such that $$\vol\left( \bigcap_{i\in S} H_i\right)\le (1+\ep'')V.$$Let $P$ be the polytope defined by $\{H_i\mid i\in S\}$, and let $j$ be an element of $\{1,\ldots,n\}\backslash S$.  We will attempt to find some $H\in \ff'_j$ that significantly reduces the volume of $P$.

Suppose towards a contradiction that, for each $H$, we have $$\vol(P\cap H)> (1+\ep'')V-\frac{1}{|\ff'_j|}\left[(1+\ep'')V-(1+\ep')\right].$$Then, we would have $$\vol\left( \bigcap_{H\in \ff'_j} H\right)\ge \bigcap_{H\in \ff'_j} \vol(P\cap H)>1+\ep',$$a contradiction. Hence, for some $H$ we must have
\begin{align*}
\vol(P\cap H)&\le (1+\ep'')V-\frac{1}{|\ff'_j|}\left[(1+\ep'')V-(1+\ep')\right]\\
&\le (1+\ep'')V-\frac{(1+\ep'')V-(1+\ep')}{d\cdot n^*(d,\ep')}.
\end{align*}

However, we assumed that the intersection of any colorful set of half-spaces has volume at least $V$.  Hence, $$V\le (1+\ep'')V-\frac{(1+\ep'')V-(1+\ep')}{d\cdot n^*(d,\ep')},$$which rearranges to $$V\le \frac{1+\ep'}{(1+\ep'')-\ep''\cdot d\cdot n^*(d,\ep')}\approx 1+\ep'\cdot \ep''\cdot d\cdot n^*(d,\ep').$$Thus, the theorem holds if we choose $\ep',\ep''$ such that $\ep''\cdot d\cdot n^*(d,\ep')\ll 1$ and $\ep'\cdot \ep''\cdot d\cdot n^*(d,\ep')<\ep$.
\end{proof}

A different proof is shown in \cite{Sob15}.

%

\section{Quantitative Tverberg theorems}\label{section-Tverberg}

We begin with a version for Tverberg's theorem that requires the intersection of the convex hulls of the parts to include a ball of large radius.

\begin{theorem}[Continuous quantitative Tverberg] \label{theorem-tverberg-volume}
Let $d, m$ be positive integers, $n=(2dm-1)(d+1)+1$ and $T_1, T_2, \ldots, T_n$ be subsets of $\R^d$ such that the convex hull of each $T_i$ contains a Euclidean unit ball, $B_1(c_i)$.  
Then, we can choose points $t_1 \in T_1, t_2 \in T_2, \ldots, t_n \in T_n$ and a partition of $\{ t_1, t_2, \ldots, t_n\}$ into $m$ sets $A_1, A_2, \ldots, A_m$ such that the intersection
\[
\bigcap_{i=1}^m \conv{A_i}
\]
contains a ball of radius $(\pi/e^2)d^{-2d-2}$.

Moreover, if  we take $n'= n^{\operatorname{bms}}(d, \ep)$ as in Definition \ref{banach-mazur-value-smooth} and instead of the $n$ above let
\[
n= (m [(n'-1)d+1]-1)(d+1)+1,
\]
then we can guarantee that $\bigcap_{i=1}^m \conv A_i$ contains a ball of radius $(1+\ep)^{-1}$.
\end{theorem}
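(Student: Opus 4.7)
The plan is to combine the classical Tverberg theorem, applied to the centers $c_1,\dots,c_n$, with one of the two colorful quantitative Steinitz theorems already proved in this paper: Lemma \ref{theorem-colored-steinitz} for the first conclusion and Theorem \ref{corollary-steinitz-for-balls} for the second. Intuitively, Tverberg on the centers produces a common point $p$; a Minkowski-sum argument lifts $p$ to a common unit ball $B_1(p)$ inside the convex hull of each Tverberg block; and colorful Steinitz then forces a small ball around $p$ into the convex hull of only a few selected $t_i$'s per block.

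Set $M=2dm$ in the first case and $M=m[(n'-1)d+1]$ in the second, so that the hypothesis $n=(M-1)(d+1)+1$ satisfies $n>(d+1)(M-1)$. Applying Tverberg's theorem to $c_1,\dots,c_n$ produces a partition $I_1,\dots,I_M$ of $\{1,\dots,n\}$ and a point $p\in\bigcap_{k=1}^M \conv\{c_j:j\in I_k\}$. For each $k$, writing $p=\sum_{j\in I_k}\lambda_j c_j$ with $\lambda_j\ge 0$ and $\sum_j \lambda_j=1$ gives
\[
B_1(p)=\sum_{j\in I_k}\lambda_j\bigl(c_j+B_1(0)\bigr)=\sum_{j\in I_k}\lambda_j B_1(c_j)\subset \conv\bigcup_{j\in I_k}T_j,
\]
so each union $\bigcup_{j\in I_k}T_j$ over a single Tverberg block contains the common unit ball $B_1(p)$ in its convex hull. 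Now partition the $M$ blocks into $m$ equal-size groups $G_1,\dots,G_m$, consisting of $2d$ blocks each (respectively $(n'-1)d+1$ blocks each). For every $\ell$, apply Lemma \ref{theorem-colored-steinitz} (respectively Theorem \ref{corollary-steinitz-for-balls}), translated so that $p$ is the origin, to the color classes $\{\bigcup_{j\in I_k}T_j : k\in G_\ell\}$. This yields one point $t_{j(k)}\in T_{j(k)}$ for each $k\in G_\ell$, with $j(k)\in I_k$, such that $\conv\{t_{j(k)}:k\in G_\ell\}\supset B_{\rho}(p)$, where $\rho=(\pi/e^2)d^{-2d-2}$ in the first case and $\rho=(1+\ep)^{-1}$ in the second. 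Finally, for every index $i$ not already selected pick any $t_i\in T_i$; let $A_\ell$ consist of the selected $t_{j(k)}$ with $k\in G_\ell$, together with some distribution of the unselected $t_i$'s so that $A_1,\dots,A_m$ partition $\{t_1,\dots,t_n\}$. Adding extra points only enlarges convex hulls, so $B_\rho(p)\subset\bigcap_{\ell=1}^m\conv A_\ell$, as required.

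The main obstacle is conceptual: choosing the correct coloring for the Steinitz step. If one naively tries to use the individual $T_j$'s as color classes, colorful Steinitz demands many more color classes than Tverberg's $(d+1)(M-1)+1$ input count can accommodate. The right move is to let an entire Tverberg block $\bigcup_{j\in I_k}T_j$ be a single color class, so that Steinitz selects one representative per block and the output partition has exactly $m$ parts. After this observation and the Minkowski-sum identity above, the parameter counts are forced: the input sizes $2d$ and $(n'-1)d+1$ of the two Steinitz theorems fix $M$, and Tverberg's inequality $n>(d+1)(M-1)$ then fixes $n$.
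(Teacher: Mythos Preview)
Your proof is correct and follows essentially the same approach as the paper: apply Tverberg to the centers, use the Minkowski-sum argument to get $B_1(p)$ inside the convex hull of each Tverberg block of $T_j$'s, group the blocks into $m$ bundles, and apply the appropriate colorful Steinitz lemma to each bundle. Your write-up is in fact slightly more complete, since you explicitly distribute the unselected $t_i$'s to form a genuine partition of all $n$ points, a detail the paper leaves implicit.
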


\begin{proof}[Proof of Theorem \ref{theorem-tverberg-volume}]
First consider the case with $(2dm-1)(d+1)+1$ sets.   Consider $C = \{c_1, c_2, \ldots, c_n\}$ the set of centers of the balls of unit radius defined in the statement of the theorem.  If we use the standard Tverberg's theorem with the set $C$, we can find a partition of $C$ into $2dm$ sets $C_1, C_2, \ldots, C_{2dm}$ such that their convex hulls intersect in some point $p$.
	
Now we split these $2dm$ parts into $m$ blocks of $2d$ parts each in an arbitrary way.  We now show that in each block, we can pick one point of each of its corresponding $T_i$ such that the convex hull of the resulting set contains $B_{r(d)}(p)$, where $r(d)$ is the constant in the quantitative Steinitz theorem.  This effectively proves the theorem, since we have $r(d) \ge (\pi/e^2)d^{-2d-2}$.

Without loss of generality, we assume that one such block is $C_1, C_2, \ldots, C_{2d}$.  For each $C_i$ consider
	\[
	\tilde{C}_i = \bigcup\{T_j : c_j \in C_i\}.
	\]
	Since $\conv(T_j) \supset c_j + B_1(0)$, we have
	\[
	\conv(\tilde{C}_i) \supset \conv(C_i) + B_1(0) \supset p + B_1(0) = B_1(p).
	\]
	Thus, we can apply Lemma \ref{theorem-colored-steinitz}, our colorful Steinitz with guaranteed containment of small balls,  to the sets $\tilde{C}_1, \tilde{C}_2, \ldots, \tilde{C}_{2d}$ and obtain a set $\{t_1, t_2, \ldots, t_{2d}\}$ with $t_i \in \tilde{C}_i$ whose convex hull contains $B_{r(d)}(p)$, as desired.
	If we are given instead $n= (m\cdot [(n'-1)d+1]-1)(d+1)+1$, we can split the sets of balls' centers into $m$ blocks of size $(n'-1)d+1$, which allows us to use Proposition \ref{corollary-steinitz-for-balls} to reach the conclusion. 
\end{proof}

\noindent \textbf{Remark.}  The resulting set of the proof above uses only $2dm$ points, so most sets $T_i$ are not being used at all; this suggests that a stronger statement may hold. Moreover, once we get the first Tverberg partition, we have complete freedom on how to split the $2dm$ parts into $m$ blocks of equal size. Thus, our approach in fact shows that there exist $\sim m^{2dm}$ different Tverberg partitions of this kind.

For an integer $q$, let $\lceil q \rceil_p$ be the smallest prime which is greater than or equal to $q$.  Then

\begin{theorem}[Colorful continuous quantitative Tverberg] \label{theorem-colorful-tverberg-volume}

Let $m, d$ be positive integers, $n=\lceil 2md+1\rceil_p -1$ and $F_1, F_2, \ldots, F_{d+1}$ be families of $n$ sets of points of $\R^d$ each.  We consider the families $F_i = \{T_{i,j}: 1 \le j \le n\}$ as the color classes.  Suppose that $\conv (T_{i,j})$ contains a ball of radius $1$ for all $i,j$.  Then, there is a choice of points $t_{i,j} \in T_{i,j}$ and a partition of the resulting set into $m$ parts $A_1, \ldots, A_{m}$ such that each $A_i$ contains at most $2d$ points of each color class and $\bigcap_{i=1}^m \conv (A_i)$ contains a ball of radius $(\pi/e^2)d^{-2d-2)}$.
	
In addition, if we take instead $n'= n^{\operatorname{bms}}(d,\ep)$ and 
\[
n=\lceil m\cdot ((n'-1)d+1)+1\rceil_p -1,
\]
and allow each $A_i$ to have $(n'-1)d+1$ points of each color, then in the conclusion we can guarantee that $\bigcap_{i=1}^m \conv (A_i)$ contains a ball of radius $(1+\ep)^{-1}$.
\end{theorem}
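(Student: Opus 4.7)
The plan is to imitate the proof of Theorem \ref{theorem-tverberg-volume}, replacing the monochromatic Tverberg theorem by the colorful Tverberg theorem of Blagojevi\'c, Matschke, and Ziegler (BMZ), applied to the ball centers $c_{i,j}$ (where $B_1(c_{i,j}) \subset \conv(T_{i,j})$), and finishing with a block-by-block application of our colorful quantitative Steinitz results. The numerical choice $n = \lceil 2md + 1 \rceil_p - 1$ is calibrated so that $n+1$ is prime and $n \ge 2md$, which is exactly the regime in which BMZ guarantees a colorful partition of $d+1$ color classes of $n$ points each into $n$ rainbow parts whose convex hulls share a common point.

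Applying BMZ to the centers produces rainbow parts $R_1, \ldots, R_n$ (each $R_\ell$ consisting of one $c_{i,j}$ per color) whose convex hulls all contain a point $p$. I then select any $2md$ of them and distribute them into $m$ blocks $B_1, \ldots, B_m$ of $2d$ rainbow parts each. For each rainbow part $R \in B_k$, form the super-set $\tilde T_R = \bigcup \{T_{i,j} : c_{i,j} \in R\}$. A direct Minkowski sum argument gives $\conv(\tilde T_R) \supset \conv(R) + B_1(0) \supset B_1(p)$, so the $2d$ super-sets in each block satisfy the hypothesis of Lemma \ref{theorem-colored-steinitz} after translating $p$ to the origin. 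That lemma delivers, for each $R \in B_k$, a chosen point $t_R \in \tilde T_R$---equivalently a point $t_{i,j} \in T_{i,j}$ for every $T_{i,j}$ whose center lies in $R$---whose $2d$-fold convex hull contains $B_{r(d)}(p)$ with $r(d) \ge (\pi/e^2)d^{-2d-2}$. Letting $A_k$ collect the $t_{i,j}$'s coming from every rainbow part $R \in B_k$ yields a set of $2d(d+1)$ points with exactly $2d$ per color class, and the Steinitz-selected subset already witnesses $\conv(A_k) \supset B_{r(d)}(p)$. This finishes the first statement.

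For the sharpened second statement only two substitutions are needed: the blocks become of size $(n'-1)d+1$ (to match the input format of Proposition \ref{corollary-steinitz-for-balls}), and Proposition \ref{corollary-steinitz-for-balls} replaces Lemma \ref{theorem-colored-steinitz}, upgrading the radius to $(1+\ep)^{-1}$. The adjusted $n = \lceil m((n'-1)d+1)+1\rceil_p - 1$ guarantees both that $n+1$ is prime (so BMZ applies) and that $n \ge m((n'-1)d+1)$ (so there are enough rainbow parts to form $m$ full blocks). The main subtlety to watch is that the super-set construction is indexed by rainbow parts, not by colors: this is precisely what ensures that Steinitz selects one point out of each colored set $T_{i,j}$ appearing in the chosen rainbow parts, which is exactly the colorful selection the conclusion demands.
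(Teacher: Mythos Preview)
Your argument is essentially the same as the paper's: apply the Blagojevi\'c--Matschke--Ziegler colorful Tverberg theorem to the centers $c_{i,j}$, group the resulting rainbow parts into $m$ blocks of size $2d$ (respectively $(n'-1)d+1$), form the super-sets $\tilde T_R$, and invoke Lemma~\ref{theorem-colored-steinitz} (respectively Theorem~\ref{corollary-steinitz-for-balls}) exactly as in the proof of Theorem~\ref{theorem-tverberg-volume}. One phrasing slip to fix: the colorful Steinitz lemma returns \emph{one} point $t_R$ from each super-set $\tilde T_R$ (hence $2d$ points per block), not ``a point $t_{i,j}\in T_{i,j}$ for every $T_{i,j}$ whose center lies in $R$''; the remaining $t_{i,j}$'s in the block are chosen arbitrarily and only enlarge $\conv(A_k)$, which is what your final sentence (``the Steinitz-selected subset already witnesses $\conv(A_k)\supset B_{r(d)}(p)$'') correctly relies on.
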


The reason why we require the use of $\lceil 2dm \rceil_p$ is the conditions for the known cases of Conjecture \ref{conjecture-colorful-Tverberg}.  If Conjecture \ref{conjecture-colorful-Tverberg} were proved, we could use $2dm$ sets in each color class instead. However, since the prime number theorem implies $\lim_{q \to \infty} \frac{\lceil q \rceil_p}{q} =1$ and in the small cases we have $\lceil q \rceil_p < 2q$, the result above is almost as good. We should note that the ``optimal colorful Tverberg'' by Blagojevi\'c, Matschke, and Ziegler \cite[Theorem 2.1]{bmz-optimal} also admits a volumetric version as above, with essentially the same proof. 

If all $T_{i,j}$ are equal to $B_1(0)$, the need to allow each $A_i$ to have more points from each color class becomes apparent from the results of inaproximability of the sphere by polytopes with few vertices \cite{bronstein2008approximation}.  The condition we have is saying that the number of points from $F_j$ in $A_i$ should not exceed $\frac{1}{m}|F_j|$.  We know that a subset of $B_1(0)$ that contains $B_{1-\ep}(0)$ should have at least $n^{\operatorname{bms}}(d,\ep)$ points, showing that the number of points we are allowing to take from each color class is optimal up to a multiplicative factor of $\sim d^2$.

\begin{proof}[Proof of Theorem \ref{theorem-colorful-tverberg-volume}]
	Let $c_{i,j}$ be the center of a ball of unit radius contained in $\conv (T_{i,j})$.  Note that we can apply the colorful Tverberg theorem in \cite[Theorem 2.1]{bmz-optimal} to the set of centers to obtain a colorful partition of them into $\lceil 2dm +1\rceil_p-1 \ge 2dm$ sets whose convex hulls intersect.  As in the proof of Theorem \ref{theorem-tverberg-volume}, we may split these sets into $m$ blocks of exactly $2d$ parts each, leaving perhaps some sets unused.
	The same application of Lemma \ref{theorem-colored-steinitz} gives us the desired result.  If we seek a ball of almost the same radius in the end, Theorem \ref{corollary-steinitz-for-balls} completes the proof.
\end{proof}

\section*{Acknowledgments} We are grateful to I.~B\'ar\'any, A.~Barvinok, F.~Frick, A.~Holmsen, J.~Pach, and G.M.~Ziegler for their comments and suggestions. This work was partially supported by the Institute for Mathematics and its Applications (IMA) in Minneapolis, MN funded by the National Science Foundation (NSF). The authors are grateful for the wonderful working environment that led to this paper. The research of De Loera and La Haye was also supported by a UC MEXUS grant. Rolnick was additionally supported by NSF grants DMS-1321794 and 1122374.

\bibliographystyle{plain}

\bibliography{references}

\noindent J.A. De Loera and R.N. La Haye\\
\textsc{
Department of Mathematics \\
University of California, Davis \\
Davis, CA 95616
 \\
}\\[0.1cm]
\noindent D. Rolnick \\
\textsc{
 Department of Mathematics \\
 Massachusetts Institute of Technology \\
 Cambridge, MA 02139
 \\
}\\[0.1cm]
\noindent P. Sober\'on \\
\textsc{
Mathematics Department \\
Northeastern University \\
Boston, MA 02115
}\\[0.1cm]

\noindent \textit{E-mail addresses: }\texttt{deloera@math.ucdavis.edu, rlahaye@math.ucdavis.edu, drolnick@math.mit.edu, p.soberonbravo@neu.edu}
\end{document}